\documentclass[12pt]{amsart}
\usepackage{amscd,amsmath,amsthm,amssymb}
\usepackage{amsfonts,amssymb,amscd,amsmath,enumerate,verbatim}
\usepackage[left]{lineno}
\usepackage{pstricks, pst-plot,pst-3d}
\usepackage{tikz}
\newpsstyle{fatline}{linewidth=1.5pt}
\definecolor{verylight}{gray}{0.97}
\definecolor{light}{gray}{0.9}
\definecolor{medium}{gray}{0.85}
\definecolor{dark}{gray}{0.6}
\def\Jc{{\mathcal J}}
\def\Rc{{\mathcal R}}

\def\Gc{{\mathcal G}}

\def\Gc{{\mathcal G}}

\def\opn#1#2{\def#1{\operatorname{#2}}} 
\opn\chara{char} \opn\length{\ell} \opn\pd{pd} \opn\rk{rk}
\opn\projdim{proj\,dim} \opn\injdim{inj\,dim} \opn\rank{rank}
\opn\depth{depth} \opn\grade{grade} \opn\height{height}
\opn\embdim{emb\,dim} \opn\codim{codim}
\opn\Cl{Cl}

\opn\Tr{Tr} \opn\bigrank{big\,rank}
\opn\superheight{superheight}\opn\lcm{lcm}
\opn\trdeg{tr\,deg}
	\opn\reg{reg} \opn\lreg{lreg} \opn\ini{in} \opn\lpd{lpd}
	\opn\size{size} \opn\sdepth{sdepth}
	\opn\link{link}\opn\fdepth{fdepth}\opn\lex{lex}
	\opn\tr{tr}\opn\del{del}
	\opn\type{type}
	\opn\gap{gap}
	\opn\arithdeg{arith-deg}
	\opn\revlex{revlex}
	\opn\div{div} \opn\Div{Div} \opn\cl{cl} \opn\Cl{Cl}
	\opn\Spec{Spec} \opn\Supp{Supp} \opn\supp{supp} \opn\Sing{Sing}
	\opn\Ass{Ass} \opn\Min{Min}\opn\Mon{Mon}
	\opn\Ann{Ann} \opn\Rad{Rad} \opn\Soc{Soc}
	\opn\Im{Im} \opn\Ker{Ker} \opn\Coker{Coker} \opn\Am{Am}
	\opn\Hom{Hom} \opn\Tor{Tor} \opn\Ext{Ext} \opn\End{End}
	\opn\Aut{Aut} \opn\id{id}
	
	\opn\nat{nat}
	\opn\pff{pf}
	\opn\Pf{Pf} \opn\GL{GL} \opn\SL{SL} \opn\mod{mod} \opn\ord{ord}
	\opn\Gin{Gin} \opn\Hilb{Hilb}\opn\sort{sort}
	\opn\PF{PF}\opn\Ap{Ap}
	\opn\mult{mult}
	\opn\bight{bight}
	\opn\div{div}
	\opn\Div{Div}
	\opn\aff{aff}
	\opn\relint{relint} \opn\st{st}
	\opn\lk{lk} \opn\cn{cn} \opn\core{core} \opn\vol{vol}  \opn\inp{inp} \opn\nilpot{nilpot}
	\opn\link{link} \opn\star{star}\opn\lex{lex}\opn\set{set}
	\opn\width{wd}
	\opn\Fr{F}
	\opn\QF{QF}
	\opn\G{G}
	\opn\type{type}\opn\res{res}
	\opn\conv{conv}
	\opn\Deg{Deg}
	\opn\Sym{Sym}
	\opn\Con{Con}
	\opn\gr{gr}
	
	\def\pot#1#2{#1[\kern-0.28ex[#2]\kern-0.28ex]}

	\opn\dirlim{\underrightarrow{\lim}}
	\opn\inivlim{\underleftarrow{\lim}}
	\let\to=\rightarrow
	
	\def\Implies{\ifmmode\Longrightarrow \else
		\unskip${}\Longrightarrow{}$\ignorespaces\fi}
	\def\implies{\ifmmode\Rightarrow \else
		\unskip${}\Rightarrow{}$\ignorespaces\fi}
	\def\iff{\ifmmode\Longleftrightarrow \else
		\unskip${}\Longleftrightarrow{}$\ignorespaces\fi}

	\let\:=\colon
	\newtheorem{Theorem}{Theorem}[section]
	\newtheorem{Lemma}[Theorem]{Lemma}
	\newtheorem{Corollary}[Theorem]{Corollary}

	\newtheorem{Example}[Theorem]{Example}
	
	\newtheorem{Definition}[Theorem]{Definition}
	\newtheorem{Problem}[Theorem]{Problem}
	\newtheorem{Conjecture}[Theorem]{Conjecture}
	
	\newtheorem{Question}[Theorem]{Question}
	\let\epsilon\varepsilon
	\let\kappa=\varkappa
	\def\qed{\ifhmode\textqed\fi
		\ifmmode\ifinner\quad\qedsymbol\else\dispqed\fi\fi}
	\def\textqed{\unskip\nobreak\penalty50
		\hskip2em\hbox{}\nobreak\hfil\qedsymbol
		\parfillskip=0pt \finalhyphendemerits=0}
	\def\dispqed{\rlap{\qquad\qedsymbol}}
	
	\opn\dis{dis}
	\def\pnt{{\raise0.5mm\hbox{\large\bf.}}}
	
	\opn\Lex{Lex}

\begin{document}
\title[Partially ordered sets of distributive type]{Partially ordered sets of distributive type and algebras with straightening laws}

\author[T.~Hibi]{Takayuki Hibi}
\author[S.~A.~ Seyed Fakhari]{Seyed Amin Seyed Fakhari}

\address{(Takayuki Hibi) Department of Pure and Applied Mathematics, Graduate School of Information Science and Technology, Osaka University, Suita, Osaka 565--0871, Japan}
\email{hibi@math.sci.osaka-u.ac.jp}
\address{(Seyed Amin Seyed Fakhari) Departamento de Matem\'aticas, Universidad de los Andes, Bogot\'a, Colombia}
\email{s.seyedfakhari@uniandes.edu.co}

\subjclass[2020]{05E40, 13H10, 06D05}

\keywords{finite lattice, simplicial complex, partially ordered set of distributive type, algebra with straightening laws, Cohen--Macaulay partially ordered set}

\begin{abstract}
A finite poset (partially ordered set) $P$ with ${\hat 0}$ is called of distributive type if every interval $[{\hat 0}, a]$, $a \in P$, of $P$ is a distributive lattice.  From a viewpoint of ASL's (algebras with straightening laws), the join-meet toric ring on a finite distributive lattice is generalized to an ASL on a finite poset of distributive type.  Our target is the questions when a finite poset of distributive lattice is Cohen--Macaulay and when the ASL on it is Gorenstein.  We focus on a natural class of finite posets of distributive type and study various aspects of the above questions.
\end{abstract}	
\maketitle
\thispagestyle{empty}

\section*{Introduction}
A partially ordered set is called a poset.  Every poset in the present paper is finite.  A {\em poset of distributive type} is a poset $P$ with a unique minimal element ${\hat 0}$ for which every intertval $[{\hat 0}, a]$, $a \in P$, of $P$ is a distributive lattice.  A poset of distributive type is a generalization of a simplicial poset \cite{Sta91}.  Stanley \cite[Lemma 3.4]{Sta91} introduces an algebra with straightening laws \cite{Eis} on a simplicial poset.  Stanley's construction can be valid without modification for a poset of distributive type (Lemma \ref{L-I:ASL}).  The algebra with straightening laws on a poset of distributive type (Definition \ref{DEF}) is a generalization of that on a distributive lattice introduced in \cite{Hibi}.

Recall that a {\em dual order ideal} \cite[p.~246]{EC1} of a poset $P$ is a subset $I \subset P$ for which if $t \in I$ and $s \geq t$, then $s \in I$.  If $I$ is a dual order ideal of a distributive lattice $L$, then $L \setminus I$ is a poset of distributive type.  In particular, if $a \in L$, then $\{x \in L : x \geq a \}$ is a dual order ideal of $L$ and $L_a:= \{x \in L : x \not\geq a \}$ is a poset of distributive type.

Historically, when the research area {\sf Combinatorics and Commutative Algebra} was born in the middle of 1970s out of Stanley \cite{Sta75} and Reisner \cite{Rei}, the trends were Cohen--Macaulay and shellable partially ordered sets \cite{Bac, Bjo}.  A reasonable project is to find a natual class of Cohen--Macaulay (or shellable) posets of distributive type.

We focus on a poset of distributive type of the form $L \setminus I$, where $L$ is a  distributive lattice and $I$ is a dual order ideal of $L$.

\begin{Problem}
\label{L_a}
{\em
Classify the distributive lattices $L$ for which $L_a$ is Cohen--Macaulay for all $a \in L$.
}
\end{Problem}

In order to study more exciting problems, the class of $L \setminus I$ is rather large.  So, we pay our attention on the divisor lattice $D_{2^n\cdot 3^m}$.

\begin{Question}
    \label{CMshellable}
{\em
Let $L=D_{2^n\cdot 3^m}$ and $I$ a dual order ideal of $L$.  Is it true that $L \setminus I$ is Cohen--Macaulay if and only if $L \setminus I$ is shellable?
}
\end{Question}

\begin{Problem}
\label{L-I}
{\em
Find a combinatorial characterization of dual order ideals $I$ of the divisor lattice $L=D_{2^n\cdot 3^m}$ for which $L\setminus I$ is Cohen--Macaulay.
}
\end{Problem}

The reason why we are interested in a poset of distributive type is to find a possible generalization of the ring introduced in \cite{Hibi}.  Let $P$ be a poset of distributive type and $K[\{x_\alpha\}_{\alpha \in P}]$ the polynomial ring in $|P|$ variables over $K$.  We  introduce a polynomial $f_{\alpha,\beta}$, where $\alpha, \beta \in P$, as follows:
\begin{itemize}
\item
$f_{\alpha,\beta}=x_{\alpha}x_{\beta}$, if $\alpha$ and $\beta$ have no common upper bound in $P$;
\item
$f_{\alpha,\beta} = x_\alpha x_\beta - x_{\alpha \wedge \beta}(\sum_\gamma x_\gamma)$, where $\gamma$ ranges over all {\em minimal} upper bounds of $\alpha$ and $\beta$, otherwise.
\end{itemize}
We introduce the ideal
\[
J_{P}=(f_{\alpha,\beta}:\alpha, \beta \in P) \subset K[\{x_\alpha\}_{\alpha \in P}].
\]
and its quotient ring $\Rc_K[P]:=K[\{x_\alpha\}_{\alpha \in P}]/J_{P}$.  It follows that 
$\Rc_K[P]$ is Cohen--Macaulay if and only if $P$ is Cohen--Macaulay (Corollary \ref{L-I:CM:ASL}).

\begin{Question}
\label{linear_resolution}
{\em
When does $J_{P}$ have a linear resolution?
}
\end{Question}

A dual order ideal $I$ of a finite distributive lattice $L$ is called {\em rank-fixed} if the set of minimal elements of $I$ is the set of elements of a fixed rank of $L$.  It follows from \cite[Theorem 6.4]{Bac} that if a dual order ideal $I$ of $L$ is rank-fixed, then $L\setminus I$ is Cohen--Macaulay.  In particular, $\Rc_K[L\setminus I]$ is Cohen--Macaulay.

\begin{Question}
\label{rank-selected}
{\em
Let $L$ be a finite distributive lattice and $I$ a rank-fixed dual order ideal of $L$.  When is $\Rc_K[L\setminus I]$ Gorenstein?
}
\end{Question}

In the present paper, after summarizing fundamental materials on finite partially ordered sets together with simplicial complexes in Section $1$, in Section $2$, in order to explain our motivation why we are interested in a poset of distributive type, we recall what an algebra with straightening laws is from \cite{Eis} and generalize the ring introduced in \cite{Hibi}.  In Section $3$, Problem \ref{L_a} is solved (Theorem \ref{L_a:classification}).  In Section $4$, Question \ref{CMshellable} together with Problem \ref{L-I} is solved (Theorem \ref{divposet}).  Section $5$ is devoted to the study on Question \ref{linear_resolution}.  Finally, in Section $6$, partial answers to Question \ref{rank-selected} are given (Theorems \ref{boolgor}, \ref{rank-del} and \ref{level}).

\section{Cohen--Macaulay partially ordered sets}
We summarize fundamental materials on finite partially ordered sets together with simplicial complexes.  Standard references on the topic are \cite{HIBIred} and \cite{StaGREEN}.

A simplicial complex on the vertex set $V = \{x_1,x_2,\ldots, x_n\}$ is a collection $\Delta$ of subsets of $V$ for which (i) each $\{x_i\} \in \Delta$ and (ii) if $F \in \Delta$ and $F' \subset F$, then $F' \in \Delta$.  Each $F \in \Delta$ is called a {\em face} of $\Delta$.  A {\em facet} is a maximal face with respect to inclusion.  Set $d = \max\{|F| : F \in \Delta\}$ and define $\dim \Delta = d - 1$.  A simplicial complex $\Delta$ is {\em pure} if $|F| = d$ for all facets $F$ of $\Delta$.
Let $f(\Delta)=(f_0,f_1,\ldots, f_{d-1})$ be the {\em $f$-vector} of $\Delta$, where $f_i$ is the number of $F \in \Delta$ with $|F|=i+1$.  Letting $f_{-1} = 1$, define the {\em $h$-vector} $h(\Delta)=(h_0,h_1,\ldots,h_d)$ of $\Delta$ by the formula
$
\sum_{i=0}^{d} f_{i-1}(x-1)^{d-i}= \sum_{i=0}^{d} h_ix^{d-i}.
$
The reduced Euler characteristic of $\Delta$ will be denoted by $\widetilde{\chi}(\Delta)$.

Let $S=K[x_1, \ldots, x_n]$ denote the polynomial ring in $n$ variables over a field $K$ with each $\deg x_i = 1$.  If $F\subset V$, then set $x_F := \prod_{x_i \in F} x_i$.  The {\em Stanley--Reisner ideal} of $\Delta$ is the ideal $I_\Delta$ of $S$ which is generated by those squarefree monomials $x_\sigma$ with $\sigma \not\in \Delta$.  The {\em Stanley--Reisner ring} of $\Delta$ is the quotient ring $K[\Delta] = S/ I_\Delta$.
We say that $\Delta$ is {\em Cohen--Macaulay} (over $K$) if $K[\Delta]$ is Cohen--Macaulay.
The {\em link} of $F\in \Delta$ is the subcomplex $\link_{\Delta}F := \{F' \in \Delta : F \cap F' = \emptyset, F \cup F' \in \Delta\}$ of $\Delta$.  Every link of a Cohen--Macaulay complex is Cohen--Macaulay.
Every Cohen--Macaulay complex is pure.  Reisner \cite{Rei} gave a topological criterion for $K[\Delta]$ to be Cohen--Macaulay.
Stanley \cite{Sta75} used Reisner's criterion in order to prove affirmatively the upper bound conjecture for spheres.  Quick references on the background are \cite{HIBIred} and \cite{StaGREEN}.

A pure simplicial complex $\Delta$ of dimension $d-1$ is called {\em shellable} if there is an ordering $F_1, \ldots, F_s$ of the facets of $\Delta$ for which $(\bigcup_{j=0}^{i-1}\langle F_j\rangle) \cap \langle F_i\rangle$ is pure of dimension $d-2$ for each $1 < i \leq s$, where $\langle F_i\rangle = \{ \sigma \in \Delta : \sigma \subset F_i \}$.   Every shellable complex is Cohen--Macaulay.  A pure simplicial complex $\Delta$ is called {\em vertex decomposable} if either $\Delta$ is a simplex or there is a vertex $x$ of $\Delta$ for which
\begin{itemize}
    \item[(i)]
$\Delta - x := \{\sigma \in \Delta : x \not\in \sigma \}$ and $\link_{\Delta}\{x\}$ are vertex decomposable and
    \item[(ii)]
    no face of $\link_{\Delta}\{x\}$ is a facet of $\Delta - x$.
\end{itemize}
Every vertex decomposable complex is shellable.

A {\em chain} of a poset $P$ is a totally ordered subset of $P$. The {\em rank} of a poset $P$ is $\rank(P) = d-1$, where $d$ is the biggest cardinality of chains of $P$.
The {\em rank} of $x \in P$ is the biggest $r$ for which there is a chain of $P$ of the form $x_0 < x_1 <\cdots < x_r=x$.  Let $\rank_P(x)$ denote the rank of $x \in P$.  A poset $P$ is called {\em pure} if all maximal chains of $P$ have the same cardinality.
An {\em interval} of a poset $P$ is a subposet $[a,b]:=\{x \in P : a \leq x \leq b\}$, where $a,b \in P$ with $a < b$.  We refer the reader to \cite[pp.~157--159]{HHgtm260} for fundamental materials on {\em distributive lattices} $L=\Jc(P)$, {\em boolean lattices} $B_n$ and {\em divisor lattices} $D_n$.
The {\em order complex} of a poset $P$ is the simplicial complex $\Delta(P)$ on $P$ whose faces are chains of $P$.  A poset $P$ is called {\em Cohen--Macaulay} if $\Delta(P)$ is Cohen--Macaulay.  Every interval of a Cohen--Macaulay poset is Cohen--Macaulay.

\section{Algebras with straightening laws}
Let $R = \bigoplus_{n=0}^{\infty} R_n$ be a noetherian graded algebra over $K$.  Let $P$ be a finite poset and suppose that an injection $\varphi: P \hookrightarrow \bigcup_{n=1}^{\infty} R_n$ for which the $K$-algebra $R$ is generated by $\varphi(P)$ over $K$ is given.  A {\em standard monomial} is a homogeneous element of $R$ of the form $\varphi(\gamma_1) \varphi(\gamma_2)\cdots \varphi(\gamma_n)$, where $\gamma_1 \leq \gamma_2 \leq \cdots \leq \gamma_n$ in $P$.  We call $R$ an {\em algebra with straightening laws} \cite{Eis} on $P$ over $K$ if the following conditions are satisfied:
\begin{itemize}
\item[]
(ASL\,-1)
The set of standard monomials is a basis of $R$ over $K$;
\item[]
(ASL\,-2)
If $\alpha$ and $\beta$ in $P$ are incomparable and if
\begin{eqnarray*}
\label{ASL}
\, \, \, \, \, \, \, \, \, \, \, \, \, \, \, \, \, \, \, \,
\varphi(\alpha)\varphi(\beta)
= \sum_{i} r_i\,\varphi(\gamma_{i_1})\varphi(\gamma_{i_2}) \cdots , \, \, \, 0 \neq r_i \in K, \, \, \, \gamma_{i_1}\leq \gamma_{i_2} \leq \cdots
\end{eqnarray*}
is the unique expression for $\varphi(\alpha)\varphi(\beta) \in R$ as a linear combination of distinct standard monomials guaranteed by (ASL\,-1), then $\gamma_{i_1} \leq \alpha, \beta$ for every $i$.
\end{itemize}
The right-hand side of the relation in (ASL\,-2) is allowed to be the empty sum $(=0)$.  We abbreviate an algebra with straightening laws as ASL.  The relations in (ASL\,-2) are called the {\em straightening relations} for $R$.

Let $S=K[x_{\alpha} : \alpha \in P]$ denote the polynomial ring in $|P|$ variables over $K$ and define the surjective ring homomorphism $\pi: S \to R$ by setting $\pi(x_\alpha) = \varphi(\alpha)$. The defining ideal $I_R$ of $R=\bigoplus_{n=0}^{\infty} R_n$ is the kernel $\ker(\pi)$ of $\pi$. If $\alpha,\beta \in P$ are incomparable, define
\[
f_{\alpha,\beta}
:= x_\alpha x_\beta - \sum_{i} r_i\,x_{\gamma_{i_1}}x_{\gamma_{i_2}} \cdots x_{\gamma_{i_{n_i}}}, \, \text{ with } 0 \neq r_i \in K, \, \, \, \gamma_{i_1}\leq \gamma_{i_2} \leq \cdots,
\]
arising from (ASL-2). Then $f_{\alpha,\beta} \in I_R$. Let $\Gc_R$ denote the set of those polynomials $f_{\alpha,\beta}$ for which  $\alpha$ and $\beta$ are incomparable in $P$. Let $<_{\mathrm{rev}}$ denote the reverse lexicographic order \cite[Example~2.1.2(b)]{HHgtm260} on $S$ induced by an ordering of the variables for which $x_{\alpha} <_{\mathrm{rev}} x_{\beta}$ if $\alpha < \beta$ in $P$. It follows from (ASL-1) that $\Gc_R$ is a Gröbner basis of $I_R$ with respect to $<_{\mathrm{rev}}$. In particular, $I_R$ is generated by $\Gc_R$.

\begin{Definition}
\label{def}
{\em
Let $L$ be a lattice and $S=K[\{x_\alpha\}_{\alpha \in L}]$ the polynomial ring in $|L|$ variables over a field $K$.  A quadratic binomial
\[
f_{\alpha,\beta}= x_\alpha x_\beta - x_{\alpha \vee \beta}x_{\alpha \wedge \beta}, \quad \alpha, \beta \in L
\]
is called a {\em join-meet binomial} of $L$.
}
\end{Definition}

Clearly, $f_{\alpha,\beta} \neq 0$ if and only if $x_\alpha$ and $x_\beta$ are incomparable in $L$.  The ideal $J_L=(f_{\alpha,\beta}:\alpha, \beta \in L)$ and its quotient ring $S/J_L$ were introduced in \cite{Hibi}.  By virtue of the classical results of Birkhoff \cite[Theorem 9.1.7]{HHgtm260} and Dedekind \cite[Chapter 3, Exercise 30]{EC1}, it is shown that $J_L$ is a prime ideal if and only if $L$ is a distributive lattice and that, when $L$ is a distributive lattice, $S/J_L$ is an
ASL on $L$ over $K$ and is normal and Cohen--Macaulay.

\begin{Definition}
\label{DEF}
{\em
Let $P$ be a poset of distributive type and $K[P]=K[\{x_\alpha\}_{\alpha \in P}]$ the polynomial ring in $|P|$ variables over $K$.  Imitating \cite{Sta91}, we introduce a  polynomial $f_{\alpha,\beta}$, where $\alpha, \beta \in P$, as follows:
\begin{itemize}
\item
$f_{\alpha,\beta}=x_{\alpha}x_{\beta}$, if $\alpha$ and $\beta$ have no common upper bound in $P$;
\item
$f_{\alpha,\beta} = x_\alpha x_\beta - x_{\alpha \wedge \beta}(\sum_\gamma x_\gamma)$, where $\gamma$ ranges over all {\em minimal} upper bounds of $\alpha$ and $\beta$, otherwise.
\end{itemize}
}
\end{Definition}

Clearly $x_{\alpha \wedge \beta}$ exists if $\alpha$ and $\beta$ have an upper bound $z$, since $[{\hat 0}_P, z]$ is a distributive  lattice. Furthermore,  $f_{\alpha,\beta} \neq 0$ if and only if $\alpha$ and $\beta$ are incomparable in $P$.

We introduce the ideal
\[
J_{P}=(f_{\alpha,\beta}:\alpha, \beta \in P) \subset K[\{x_\alpha\}_{\alpha \in P}]
\]
and its quotient ring
\[
\Rc_K[P] := K[\{x_\alpha\}_{\alpha \in P}]/J_{P}. \]
In particular, if $L$ is a distributive lattice, then $\Rc_K[L]$ coincides with $S/J_L$ in the previous paragraph.

Now, the proof of Stanley \cite[Lemma 3.4]{Sta91} is valid for the following lemma.

\begin{Lemma}
    \label{L-I:ASL}
    The quotient ring $\Rc_K[P]$ is an ASL on $P$ over $K$.
\end{Lemma}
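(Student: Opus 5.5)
Set $R=\Rc_K[P]$ and $\varphi(\alpha)=\bar x_\alpha$. Condition (ASL-2) is immediate from the shape of the relations. If $\alpha,\beta$ are incomparable, then $f_{\alpha,\beta}=0$ in $R$ expresses $\bar x_\alpha\bar x_\beta$ either as $0$ or as $\sum_\gamma \bar x_{\alpha\wedge\beta}\bar x_\gamma$. Each term $x_{\alpha\wedge\beta}x_\gamma$ is standard, since $\alpha\wedge\beta<\gamma$, and $\alpha\wedge\beta\le\alpha,\beta$. So everything reduces to (ASL-1). The plan is to prove it in two halves: the standard monomials span $R$, and they are linearly independent. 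For independence, following Stanley's argument for simplicial posets \cite[Lemma 3.4]{Sta91}, I will use a Hilbert series comparison.

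\emph{Spanning.} I will use a straightening algorithm. To a monomial $x_{\delta_1}\cdots x_{\delta_n}$ attach the vector of ranks $\rank_P(\delta_i)$, sorted increasingly, and order these vectors lexicographically. A non-standard monomial contains an incomparable pair $\alpha,\beta$. Replacing $x_\alpha x_\beta$ via $f_{\alpha,\beta}$ either gives $0$ or gives terms $x_{\alpha\wedge\beta}x_\gamma$. In the latter case $\rank(\alpha\wedge\beta)<\min(\rank\alpha,\rank\beta)$, so each new monomial is strictly smaller in this order. Since the order is a well-order on monomials of fixed degree, the process terminates, and every monomial becomes a combination of standard monomials.

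\emph{Independence.} Let $\Delta=\Delta(P)$. The number of standard monomials of degree $n$ equals the degree-$n$ coefficient of the Hilbert series of the Stanley--Reisner ring $K[\Delta]$, namely $\sum_{F\in\Delta}(t/(1-t))^{|F|}$. So it suffices to show $\dim_K R_n\ge$ this number. The plan is to exhibit, for each chain $F=\{\gamma_1<\cdots<\gamma_k\}$, a $K$-algebra map from $R$ to $K[y_1,\dots,y_k]$ whose images of the standard monomials on $F$ are independent. Alternatively, and more cleanly, I will build an explicit model and verify that the relations hold there. Following Stanley, take the ring $A=\prod_{\text{maximal }m\in P}\Rc_K[[\hat0,m]]$, or a suitable fibred product over the intervals. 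For each maximal $m$, the interval $[\hat0,m]$ is a distributive lattice, and there $\Rc_K[[\hat0,m]]$ is the Hibi ring, known to be an ASL. I send $x_\alpha$ to the tuple whose $m$-component is $x_\alpha$ if $\alpha\le m$ and $0$ otherwise.

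I must check that each $f_{\alpha,\beta}$ maps to zero, componentwise. Fix a component $m$. If $\alpha,\beta\not\le m$ together, the component is trivially handled, modulo checking the summands $x_{\alpha\wedge\beta}x_\gamma$. Here I use that at most one minimal upper bound $\gamma$ lies below $m$: inside the lattice $[\hat0,m]$ the join $\alpha\vee\beta$ is the unique minimal upper bound there. This gives exactly the join-meet relation, and the components where some factor is not below $m$ vanish. Independence then follows: the image of a standard monomial on a chain $F$ is nonzero in the component of any maximal $m\ge\max F$. Moreover, a nontrivial linear relation among standard monomials, projected to each component, gives relations among standard monomials of the distributive lattice $[\hat0,m]$. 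Those are independent by the Hibi ASL property, and every chain lies in some interval $[\hat0,m]$, so all coefficients vanish.

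The main obstacle is the componentwise verification. One must see that a minimal upper bound $\gamma$ of $\alpha,\beta$ that is $\le m$ coincides with the lattice join in $[\hat0,m]$. One must also see that $\alpha\wedge\beta$ computed in $P$ agrees with the meet in $[\hat0,m]$; this holds because $[\hat0,z]$ is a lattice for any common upper bound $z$, and the meet does not depend on $z$. I would carefully check these two facts first.
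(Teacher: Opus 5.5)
Your proof is correct and takes the same route as the argument of Stanley that the paper invokes. Spanning comes from straightening via $f_{\alpha,\beta}$, and independence comes from the homomorphism into $\prod_m \Rc_K[[\hat0,m]]$ over the maximal elements $m$, with Hibi's ASL theorem for the distributive lattices $[\hat0,m]$ replacing the boolean case (so your Hilbert-series count is not needed). The facts you left to check all follow at once from $[\hat0,m]$ being a down-set: the join of $\alpha,\beta$ in $[\hat0,m]$ is itself a minimal upper bound in $P$ (so the $m$-component of $f_{\alpha,\beta}$ is exactly the join-meet binomial), and any minimal upper bound $\gamma\le m$ forces $\alpha,\beta\le m$ (so all other components vanish).
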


It then follows from \cite[Corollary 4.2]{Eis} and \cite[Corollary 2.7]{CV} that

\begin{Corollary}
    \label{L-I:CM:ASL}
    The quotient ring $\Rc_K[P]$ is Cohen--Macaulay if and only if $P$ is Cohen--Macaulay.
\end{Corollary}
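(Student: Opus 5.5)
The plan is to deduce the statement from Lemma \ref{L-I:ASL} together with the two cited results. The idea is to pass through the discrete (Stanley--Reisner) deformation of an ASL.

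First I would record the degeneration. By Lemma \ref{L-I:ASL}, $\Rc_K[P]$ is an ASL on $P$. As explained before Definition \ref{def}, the straightening relations $\Gc_R$ form a Gröbner basis of $J_P$ with respect to $<_{\mathrm{rev}}$. In each $f_{\alpha,\beta}$ with $\alpha,\beta$ incomparable, every other monomial contains a variable $x_\gamma$ with $\gamma \le \alpha\wedge\beta < \alpha,\beta$, and such a variable is smaller in the ordering. So the revlex leading term is $x_\alpha x_\beta$. Hence $\ini_{<_{\mathrm{rev}}}(J_P)$ is generated by the products $x_\alpha x_\beta$ with $\alpha,\beta$ incomparable, and this is precisely the Stanley--Reisner ideal of the order complex $\Delta(P)$. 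Thus $\ini(J_P)=I_{\Delta(P)}$, and the discrete ASL $K[\Delta(P)]$ is a flat (Gröbner) degeneration of $\Rc_K[P]$.

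Next come the two directions. If $P$ is Cohen--Macaulay, then $K[\Delta(P)]$ is Cohen--Macaulay by definition. Cohen--Macaulayness transfers from the initial ideal to the ideal under a Gröbner degeneration, which is exactly \cite[Corollary 4.2]{Eis} in the ASL setting. Therefore $\Rc_K[P]$ is Cohen--Macaulay.

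For the converse, suppose $\Rc_K[P]$ is Cohen--Macaulay. In general Cohen--Macaulayness does not descend to an initial ideal, so here I would invoke \cite[Corollary 2.7]{CV}. That result says an ASL is Cohen--Macaulay if and only if its discrete counterpart $K[\Delta(P)]$ is; its proof goes through the comparison of local cohomology, or of Hilbert functions together with depth. It follows that $\Delta(P)$ is Cohen--Macaulay, i.e.\ $P$ is.

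The main obstacle is this converse direction. Flatness yields equal Hilbert series, but depth can only drop under degeneration, so the nontrivial input is the De Concini--Eisenbud--Procesi/Conca--Varbaro type result guaranteeing equality of depths. I would check that its hypotheses hold in our setting: the initial ideal must be squarefree, which it is since it equals $I_{\Delta(P)}$, and the grading must be standard, which holds since each $x_\alpha$ has degree $1$.
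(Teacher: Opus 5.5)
Your proposal is correct and is essentially the paper's argument. The paper simply combines Lemma~\ref{L-I:ASL} with \cite[Corollary 4.2]{Eis} and \cite[Corollary 2.7]{CV}, and your explicit check that the revlex initial ideal of $J_P$ is the squarefree Stanley--Reisner ideal $I_{\Delta(P)}$ supplies the hypothesis the paper leaves implicit; since that initial ideal is squarefree, \cite[Corollary 2.7]{CV} alone already yields both directions.
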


\begin{Example}
    {\em  Let $P$ be the poset of distributive type of Figure $1$.  One has
    \[
    f_{x_2,x_3}= x_2x_3 - x_0(x_6+x_7).
    \]
    }
\end{Example}

 \begin{figure}[h]
\begin{tikzpicture}[scale=1]
\node[draw,shape=circle] (x0) at (0,0) {};
\node[draw,shape=circle] (x1) at (-1,1) {};
\node[draw,shape=circle] (x2) at (1,1) {};
\node[draw,shape=circle] (x3) at (-2,2) {};
\node[draw,shape=circle] (x4) at (0,2) {};
\node[draw,shape=circle] (x5) at (2,2) {};
\node[draw,shape=circle] (x6) at (-2,3) {};
\node[draw,shape=circle] (x7) at (0,3) {};
\node[draw,shape=circle] (x8) at (1,3) {};
\draw(x0)node[below=2mm]{\large{$x_0$}}--(x1)node[below=2mm]{\large{$x_1$}}--(x3)node[below=2mm]{\large{$x_3$}}--(x6)node[above=2mm]{\large{$x_6$}}--(x4)node[below=2mm]{\large{$x_4$}}--(x8)node[above=2mm]{\large{$x_8$}}--(x5)node[below=2mm]{\large{$x_5$}}--(x2)node[below=2mm]{\large{$x_2$}}--(x0);
\draw(x3)--(x7)node[above=2mm]{\large{$x_7$}}--(x4);
\draw(x1)--(x4)--(x2);
\end{tikzpicture}
    \caption{A poset of distributive type}
\end{figure}
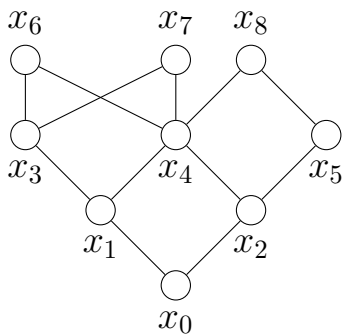

We close Section $2$ with a restriction lemma as follows.

\begin{Lemma}
\label{restrict}
Let $P$ be a poset of distributive type with the property that any pair of elements of $P$ has at most one minimal upper bound.  Let $L'=[a,b]$ be an interval of $P$ and set $S':=K[\{x_{\alpha}\}_{\alpha\in L'}]$.  Then, for all $i$ and $j$, one has
\[
\beta_{i,j}^{S'}(\Rc_K[L'])\leq \beta_{i,j}^S(\Rc_K[P]).
\]
\end{Lemma}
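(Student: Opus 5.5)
The plan is to realize $\Rc_K[L']$ as an algebra retract of $\Rc_K[P]$ and then use the standard fact (Ohsugi--Herzog--Hibi) that the graded Betti numbers of an algebra retract are bounded by those of the ambient algebra. Equivalently, I will show that $J_{L'}$ is a \emph{combinatorially pure} subideal of $J_P$ in the sense of the restriction lemmas for toric and binomial ideals. The hypothesis is used first. Any two elements $\alpha,\beta$ of $P$ have at most one minimal upper bound $\gamma$. Hence every nonzero generator of $J_P$ is either the monomial $x_\alpha x_\beta$ (no common upper bound) or the binomial $x_\alpha x_\beta - x_{\alpha\wedge\beta}x_{\alpha\vee\beta}$ with $\alpha\vee\beta:=\gamma$. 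Moreover, $J_P$ is multigraded once we assign to $x_\alpha$ a suitable degree. Concretely, I grade $S$ by $\mathbb{Z}^{P}$ through a valuation: send $x_\alpha$ to the indicator vector of the set of join-irreducibles of $[\hat 0,\alpha]$, viewed inside $P$. Distributivity of each $[\hat 0,z]$ makes the binomial homogeneous, since $\alpha\wedge\beta$ and $\alpha\vee\beta$ have the same multiset of join-irreducibles as $\alpha,\beta$.

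The core step is the following. Let $\alpha,\beta\in L'=[a,b]$. If $\alpha,\beta\le b$, then $b$ is a common upper bound. The unique minimal upper bound $\gamma$ is then $\le b$: all minimal upper bounds lie below or equal to some minimal element under $b$, and uniqueness forces $\gamma=\alpha\vee\beta$ computed in $[\hat 0,b]$. Also $\alpha\wedge\beta\ge a$. So $J_{L'}=J_P\cap S'$ on the generator level. Conversely, I need the following: if $f\in\Gc_P$ has a term whose variables all lie in $L'$, then every variable of $f$ lies in $L'$. For a binomial $x_\alpha x_\beta-x_{\alpha\wedge\beta}x_{\gamma}$ this is clear in one direction as above. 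In the other direction, $\alpha\wedge\beta,\gamma\in[a,b]$ implies $\alpha,\beta\in[a,b]$. This purity lets me define the retraction $\varepsilon\colon \Rc_K[P]\to\Rc_K[L']$ by $x_\alpha\mapsto x_\alpha$ for $\alpha\in L'$ and $x_\alpha\mapsto 0$ otherwise, and check that it is well defined. The map kills each generator of $J_P$, because a generator either lies in $S'$ (then it is in $J_{L'}$) or has each term containing a variable outside $L'$. Composing with the inclusion $\Rc_K[L']\to\Rc_K[P]$ gives the identity.

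Alternatively, and more robustly, I would prove the inequality directly via the multigrading. Let $\mathbf{c}$ range over degrees supported in the "window" determined by $L'$, meaning degree vectors of monomials in $S'$. Then $(S/J_P)_{\mathbf c}$ is spanned by images of monomials of degree $\mathbf c$. Here I need to show that any monomial of $S$ with such a degree is already in $S'$; this uses $a\le\alpha\le b$ being detected by join-irreducibles between those of $a$ and $b$. Granting this, the restriction of the minimal multigraded free resolution of $\Rc_K[P]$ to those degrees is a minimal free resolution of $\Rc_K[L']$ over $S'$, via Tor computed by the Koszul complex in degree $\mathbf c$. This gives $\beta^{S'}_{i,j}(\Rc_K[L'])\le\beta^S_{i,j}(\Rc_K[P])$.

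The main obstacle is the degree bookkeeping: choosing a multigrading on $S$ under which $J_P$ is homogeneous, and showing that degrees of monomials in $S'$ are attained only by monomials in $S'$. The difficulty is that join-irreducibles in different intervals $[\hat 0,z]$ must be identified consistently across $P$. I would first try grading by the multiset of elements of $P$ that cover $\hat0$-chains, i.e.\ the rank-one join-irreducibles inside each $[\hat 0,\alpha]$, identified as elements of $P$. I would also use the "at most one minimal upper bound" hypothesis to ensure these identifications are compatible.
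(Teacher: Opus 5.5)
Your main argument is correct and is the paper's route: realize $\Rc_K[L']$ as an algebra retract of $\Rc_K[P]$ and apply \cite[Corollary 2.5]{OHH}. Your explicit check supplies the retraction, and hence $J_{L'}=J_P\cap S'$, which the paper only asserts as ``easily seen'': the map $x_\alpha\mapsto 0$ for $\alpha\notin L'$ kills every generator of $J_P$, because uniqueness of the minimal upper bound puts $\alpha\vee\beta$ below $b$, while $\alpha\wedge\beta\geq a$ and $\gamma\leq b$ force $\alpha,\beta\in L'$. The multigraded alternative is unnecessary and can be dropped; as sketched it is not a positive grading, since $x_{\hat 0}$ gets degree $0$, so it would at least need an extra total-degree coordinate.
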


\begin{proof}
One easily sees that $J_{L'}=J_{P}\cap S'$, which shows that $\Rc_K[L']\subset \Rc_K[P]$ is an algebra retract. It follows from \cite[Corollary 2.5]{OHH} that $\beta_{i,j}^{S'}(\Rc_K[L'])\leq \beta_{i,j}^{S}(\Rc_K[P])$, as desired.
\hspace{12.3cm}
\end{proof}

\section{Problem \ref{L_a}}
The present section is devoted to solving Problem \ref{L_a}.  A classification of  distributive lattices $L$ for which $L_a$ is Cohen--Macaulay for all $a \in L$ is presented.

\begin{Lemma} \label{forbidden3}
Let $L=\Jc(P)$ be a distributive lattice and suppose that $P$ has three elements $p_1, p_2, q_1$ with $p_1<p_2$ for which $q_1$ is comparable with neither $p_1$ nor $p_2$.  Then there exists $a \in L$ for which $L_a$ is not Cohen--Macaulay.
\end{Lemma}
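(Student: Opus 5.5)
The plan is to take $a$ to be the order ideal of $P$ generated by $p_1$ and $q_1$, and to exhibit a link in $\Delta(L_a)$ that is disconnected but has dimension at least $1$. Recall that $L=\Jc(P)$ consists of the order ideals of $P$, ordered by inclusion. Put $U_1=\{s\in P: s\geq p_1\}$ and $U_2=\{s\in P: s\geq q_1\}$. An order ideal $y$ satisfies $y\geq a$ if and only if $p_1,q_1\in y$. Hence $L_a$ is the set of order ideals of $P$ that omit $p_1$ or omit $q_1$.

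First, set $b:=P\setminus(U_1\cup U_2)$. This is an order ideal, because $U_1\cup U_2$ is a dual order ideal of $P$. It omits both $p_1$ and $q_1$, so $b\in L_a$.

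Next, describe the elements of $L_a$ lying strictly above $b$. Let $y\in L_a$ with $y>b$. Then $y\setminus b$ is a nonempty subset of $U_1\cup U_2$.
\begin{itemize}
\item If $y$ contains some $s\in U_1$, then $p_1\leq s$ forces $p_1\in y$, because $y$ is an order ideal.
\item Likewise, if $y$ meets $U_2$, then $q_1\in y$.
\end{itemize}
Since $y\not\geq a$, it cannot contain both $p_1$ and $q_1$. So $y$ meets exactly one of $U_1$ and $U_2$, and in that case $y\cap U_2=\emptyset$ or $y\cap U_1=\emptyset$ respectively. Thus $Q:=\{y\in L_a : y>b\}$ is the disjoint union $Q_1\sqcup Q_2$, where $Q_1$ consists of the ideals containing $p_1$ and $Q_2$ of those containing $q_1$. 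No element of $Q_1$ is comparable with an element of $Q_2$: if $y\leq y'$ with $y\in Q_1$ and $y'\in Q_2$, then $y'$ would contain both $p_1$ and $q_1$. Consequently $\Delta(Q)=\Delta(Q_1)\sqcup\Delta(Q_2)$ is disconnected. Both pieces are nonempty, since $b\cup\{q_1\}\in Q_2$.

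Now use the hypothesis on $p_1<p_2$. Since $q_1\not\leq p_1$ and $q_1\not\leq p_2$, no element below $p_2$ lies in $U_2$. Every element of $U_1$ that lies below $p_2$ is of the form $s$ with $p_1\leq s\leq p_2$, and the elements of $P$ below $p_2$ lying outside $U_1$ already belong to $b$. Therefore $b\cup\{p_1\}$ and $b\cup\{s\in P: p_1\leq s\leq p_2\}$ are order ideals. Both lie in $Q_1$, and they form a chain of length $2$ in $Q_1$, so $\dim\Delta(Q)\geq 1$.

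Finally, let $C$ be a maximal chain of the interval $[{\hat 0},b]$ of $L_a$; it contains $b$ as its top element. Any chain of $L_a$ extending $C$ can only add elements strictly above $b$, so $\link_{\Delta(L_a)}C=\Delta(Q)$. If $L_a$ were Cohen--Macaulay, this link would be Cohen--Macaulay. But a Cohen--Macaulay complex of dimension at least $1$ is connected, since its reduced $\widetilde H_0$ vanishes by Reisner's criterion, whereas $\Delta(Q)$ is disconnected of dimension at least $1$. Hence $L_a$ is not Cohen--Macaulay.

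The main point to check carefully is the identification of $\link_{\Delta(L_a)}C$ with $\Delta(Q)$. The other delicate step is the verification that $Q_1$ really contains a chain of length $2$; this is exactly where $p_1<p_2$ and the incomparability of $q_1$ with $p_2$ are used.
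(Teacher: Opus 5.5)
Your proof is correct, and it takes a genuinely different and simpler route than the paper's. Both arguments use the same $a$ (the order ideal generated by $p_1$ and $q_1$). Both also pass to the link of a saturated chain starting at $\hat 0$.

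The paper stops this chain at the lower point $a_3=\langle p_1\rangle\cap\langle q_1\rangle$. Above $a_3$ there can still be ideals containing neither $p_1$ nor $q_1$, and these join the ``$p_1$-side'' to the ``$q_1$-side'', so that link need not be disconnected. The paper therefore invokes the stronger fact that a Cohen--Macaulay complex is strongly connected, i.e.\ any two facets are joined by a sequence in which consecutive facets share a ridge \cite[Lemma 9.1.12]{HHgtm260}. It then proves a separate Claim, using saturation and maximality of facets, that the top element of the facets cannot change along such a sequence. This contradicts the fact that the facets through $\{a_1,b\}$ and through $a_2$ have different top elements.

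You instead climb to $b=P\setminus(U_1\cup U_2)$, the largest order ideal omitting both $p_1$ and $q_1$. Then every element of $L_a$ above $b$ contains exactly one of $p_1,q_1$, and the link splits into two mutually incomparable pieces. Only ordinary connectivity (vanishing of $\widetilde H_0$ via Reisner) is needed. The hypotheses $p_1<p_2$ and $q_1\not\le p_2$ serve only to force $\dim\ge 1$, through the chain $b\cup\{p_1\}\subsetneq b\cup[p_1,p_2]$; this is correctly identified as essential, since a disconnected $0$-dimensional complex is Cohen--Macaulay.

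What your choice buys is the elimination of the facet-path bookkeeping in the paper's Claim. The paper's more natural base point (the meet of the two principal ideals) costs it the stronger and more delicate strong-connectivity argument.

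The one detail you leave implicit is that $b\cup\{q_1\}$ is an order ideal. This holds because an element strictly below $q_1$ is not in $U_2$, and it is not in $U_1$ because $p_1\not<q_1$.
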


\begin{proof}
Let $a\in L$ denote the poset ideal of $P$ whose maximal elements are $p_1, q_1$ and $\Delta=\Delta(L_a)$ the order complex of $L_a$.  Our mission is to show that $\Delta$ is not Cohen--Macaulay.  Assume that $\Delta$ is Cohen--Macaulay.  Let $a_1\in L$ (resp. $a_2\in L$) denote the poset ideal of $P$ whose unique maximal element is $p_1$ (resp. $q_1$).  Set $a_3:=a_1\cap a_2$. Let $F\in \Delta$ denote the face consisting of a saturated chain of $L$ starting at $\emptyset$ and ending in $a_3$. Set $\Gamma:=\link_{\Delta} F$.

Let $b\in L$ denote the poset ideal of $L$ whose unique maximal element is $p_2$. It follows that $e=\{a_1, b\}$ is an edge of $\Gamma$ and $v:=\{a_2\}$ is a vertex of $\Gamma$. So, there are facets $G_1, G_2$ of $\Gamma$ with $e\subseteq G_1$ and $v\in G_2$. Since no element of $L_a$ (as a poset ideal of $P$) contains both $p_1, q_1$, we have $G_1\neq G_2$. Since $\Delta$ is Cohen--Macaulay, we deduce that $\Gamma $ is Cohen--Macaulay. In particular, there is a sequence $G_1=H_1, H_2, \ldots, H_s=G_2$ of facets of $\Gamma$ such that $|H_i\cap H_{i+1}|=|H_i|-1=|H_{i+1}|-1$ for each $1 \leq i\leq s$ (\cite[Lemma 9.1.12]{HHgtm260}). Recall that each $H_i$ is a chain of poset ideals of $P$.  Write $I_i$ for the maximal poset ideal of $H_i$ for each $1 \leq i \leq s$.

\medskip

{\bf Claim.} One has $I_i=I_{i+1}$ for each $1 \leq i \leq s-1$.

\medskip

\noindent
{\it Proof of the claim.} Suppose that Claim is false.  Let $i$ be the smallest integer for which $I_i\neq I_{i+1}$.  So, $I_1=\cdots = I_{i-i}=I_i$.  Since $H_j$ is a saturated chains of poset ideals of $P$, we assume that the chain of $H_j$ is given by poset ideals $$I_j^1\varsubsetneq I_j^2\varsubsetneq \ldots \varsubsetneq I_{j}^d=I_j.$$Since $I_i\neq I_{i+1}$ and $|H_i\cap H_{i+1}|=|H_i|-1=|H_{i+1}|-1$, we deduce that $I_i^k=I_{i+1}^k$ for each $1 \leq k \leq d-1$. Since the above chain is saturated, it follows that $$|I_i^d|=|I_i^{d-1}|+1=|I_{i+1}^{d-1}|+1=|I_{i+1}^d|.$$ Consequently, there are $x,y\in P$ for which $I_i^d=I_i^{d-1}\cup\{x\}$ and $I_{i+1}^d=I_i^{d-1}\cup\{y\}$.  First, assume that $x=p_1$ and $y=q_1$. Then $p_1\notin I_i^{d-1}$. Since $p_1<_Pp_2$, one has $p_2\notin I_i^{d-1}$.  Hence, $$p_2\notin I_i^{d-1}\cup\{p_1\}=I_i^{d-1}\cup\{x\}=I_i^d=I_i=I_1.$$This is a contradiction, as $p_2\in I_1$. Thus $(x,y)\neq (p_1,q_1)$. Furthermore, one has $(x,y)\neq (q_1,p_1)$, as $q_1\notin I_1$. As a result, $I_1^{d-1}\cup\{x,y\}$ is a poset ideal of $P$ which belongs to $L_a$. Therefore, the chain $$I_1^1\varsubsetneq I_1^2\varsubsetneq \ldots \varsubsetneq I_1^d\varsubsetneq I_1^{d-1}\cup\{x,y\}$$is a chain of poset ideals of $P$ which belong to $L_a$ and properly contains $H_1$, a contradiction.  This completes the proof of Claim.

\medskip

Now, Claim says that $I_1=I_s$. However, this is impossible, as $q_1\in I_s\setminus I_1$. \, \,
\end{proof}

\begin{Lemma} \label{graded}
Let $P$ be a poset and suppose that if $p_1 < p_2$ in $P$, then each $q \in P$ is comparable to either $p_1$ or $p_2$. Then $P$ is pure.
\end{Lemma}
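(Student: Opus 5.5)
The plan is to show that the hypothesis forces $P$ to be a \emph{weak order}: $P$ splits into antichains $A_1,\ldots,A_k$ with every element of $A_i$ below every element of $A_j$ whenever $i<j$. Purity then follows at once. Throughout, write $x \parallel y$ when $x$ and $y$ are incomparable.

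First I will show that the relation $x \sim y$, meaning $x=y$ or $x\parallel y$, is an equivalence relation on $P$. Reflexivity and symmetry are clear, so only transitivity needs checking. Suppose $a\parallel b$ and $b\parallel c$ with $a\neq c$, and assume $a$ and $c$ are comparable, say $a<c$. Then $b$ is comparable with neither $a$ nor $c$, contradicting the hypothesis applied to $p_1=a<p_2=c$ and $q=b$. Hence $a\parallel c$.

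The equivalence classes are therefore antichains, and any two elements in different classes are comparable. Next I will check that the order between classes is consistent. Let $A\neq B$ be classes and suppose $a<b$ for some $a\in A$ and $b\in B$; I claim $a'<b'$ for all $a'\in A$ and $b'\in B$.
\begin{itemize}
\item Take $a'\in A$ with $a'\neq a$. Then $a'$ and $b$ are comparable. If $b<a'$, then $a<b<a'$ gives $a<a'$, contradicting $a\parallel a'$. So $a'<b$.
\item Symmetrically, take $b'\in B$ with $b'\neq b$. If $b'<a'$, then $b'<a'<b$ gives $b'<b$, contradicting $b\parallel b'$. So $a'<b'$.
\end{itemize}
Thus setting $A\prec B$ whenever some (equivalently every) element of $A$ lies below some element of $B$ defines a relation between classes. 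It is total, since elements of distinct classes are comparable. It is transitive and antisymmetric because these properties are inherited from the order on $P$. Listing the classes as $A_1\prec\cdots\prec A_k$ gives the weak-order description.

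Finally I will show that every maximal chain $C$ of $P$ has exactly $k$ elements. Since each $A_i$ is an antichain, $C$ meets each $A_i$ in at most one element. Suppose $C$ misses some $A_j$, and pick $x\in A_j$. Then $x$ lies in a different class from every element of $C$, so $x$ is comparable with all of them. Hence $C\cup\{x\}$ is a chain, contradicting the maximality of $C$. Therefore every maximal chain has exactly $k$ elements, and $P$ is pure.

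The only step needing care is the consistency of the order between classes. The two-sided argument above, which replaces $a$ first and then $b$, handles it.
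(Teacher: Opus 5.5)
Your proof is correct, and it takes a genuinely different route from the paper.

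\textbf{The paper's route.} The paper proves purity directly by an exchange induction. It fixes a maximal chain $C_0$ of minimum length. For an arbitrary maximal chain $C$, it inducts on $\alpha(C)$, which measures how soon $C$ departs from $C_0$. At the first disagreement it applies the hypothesis to the consecutive pair $y_{\beta+1}<y_{\beta+2}$ of $C$ with $q=x_{\beta+1}$. This either contradicts maximality of $C$ or $C_0$, or lets it swap $y_{\beta+1}$ for $x_{\beta+1}$ and extend to a maximal chain $C''$ with smaller $\alpha$. Only afterwards, in Lemmas \ref{comprank} and \ref{clutter}, does the paper deduce that elements of different ranks are comparable and that $P$ is a sum of antichains.

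\textbf{Your route.} You reverse this order. You show that incomparability is transitive, so the classes of ``equal or incomparable'' are antichains and any two elements in distinct classes are comparable. You then check, with your two-sided replacement argument, that the order between classes is consistent. This gives the sum-of-antichains decomposition first, and purity follows because a maximal chain must meet every class exactly once.

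\textbf{Comparison.}
\begin{itemize}
\item Your argument is shorter and avoids the bookkeeping of the induction on $\alpha(C)$.
\item It proves the implication (i)$\Rightarrow$(iii) of Lemma \ref{clutter} directly, which makes Lemma \ref{comprank} unnecessary for that purpose.
\item The paper's argument works locally along chains and only ever invokes the hypothesis for covering pairs. However, this buys no extra generality. If $q$ is incomparable to $p_1<p_2$, take $p_1\lessdot z_1\le p_2$; the covering version forces $q$ to be comparable to $z_1$, hence to $p_1$ or to $p_2$, a contradiction.
\end{itemize}
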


\begin{proof}
Let $C_0: x_1<x_2<\ldots < x_m$ be a maximal chain of minimum length in $P$ and choose an arbitrary maximal chain $C: y_1<y_2<\ldots < y_n$
of $P$. Thus $m\leq n$.  Our mission is to show that $m=n$. Set$$\alpha(C):=m-\max\{i\mid x_k=y_k \ {\rm for} \ k=1, \ldots, i\}.$$Note that $\alpha(C)=m$ if $x_1\neq y_1$.  Our work proceeds with induction on $\alpha(C)$.  If $\alpha(C)=0$, then $C_0=C$ and there is nothing to prove. Suppose that $\alpha(C)\geq 1$. To simplify the notation, set$$\beta:=m-\alpha(C)=\max\{i\mid x_k=y_k \ {\rm for} \ k=1, \ldots, i\}.$$ Since $\alpha(C)\geq 1$, one has $\beta\leq m-1$. Furthermore, we know that $x_{\beta}=y_{\beta}$ and $x_{\beta+1}\neq y_{\beta+1}$.

Now, suppose that $m<n$ and consider the elements $x_{\beta+1}, y_{\beta+1}, y_{\beta+2}\in P$. If $x_{\beta+1}=y_{\beta+2}$, then $C_0\cup\{y_{\beta+1}\}$ is a chain of $P$ which properly contains $C_0$. This is a contradiction. Therefore, $x_{\beta+1}\neq y_{\beta+2}$. It follows from the assumption that $x_{\beta+1}$ is comparable to either $y_{\beta+1}$ or $y_{\beta+2}$. So, the following two cases arise.

\medskip

{\bf Case 1.} Let $x_{\beta+1}$ be comparable to $y_{\beta+1}$. If $x_{\beta+1}<y_{\beta+1}$, then $C\cup\{x_{\beta+1}\}$ is a chain of $P$ which properly contains $C$, a contradiction. Similarly, if $y_{\beta+1}<x_{\beta+1}$, then $C_0\cup\{y_{\beta+1}\}$ is a chain of $P$ which properly contains $C_0$, again a contradiction.

\medskip

{\bf Case 2.} Let $x_{\beta+1}$ be comparable to $y_{\beta+2}$. If $y_{\beta+2}<x_{\beta+1}$, then $C_0\cup\{y_{\beta+2}\}$ is a chain of $P$ which properly contains $C_0$, a contradiction. Hence, $x_{\beta+1}<y_{\beta+2}$. In this case, set $C':=(C\setminus\{y_{\beta+1}\})\cup\{x_{\beta+1}\}$. It follows that $C'$ is a chain of $P$ with the same length as $C$. Let $C''$ be a maximal chain of $P$ containing $C'$. Then $\alpha(C'')<\alpha(C)$. Thus we deduce from the induction hypothesis that $C_0$ and $C''$ have the same length. Since $|C_0|\leq |C|\leq |C''|$, it follows that $m=n$, as desired.
\,
\end{proof}

\begin{Lemma} \label{comprank}
Let $P$ be a pure poset satisfying the condition of Lemma \ref{graded}.  If $p,q \in P$ with $\rank_P(p) + 1 = \rank_P(q)$, then $p<q$.
\end{Lemma}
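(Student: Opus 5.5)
Let $r=\rank_P(p)$. Since $P$ is pure, $\rank_P$ is well behaved: every maximal chain has the same length, and any element $x$ of rank $s$ lies on a maximal chain whose $s$-th element (counting from $0$) is $x$. The plan is to use the comparability condition of Lemma \ref{graded} on a chain through $q$, and force $p$ to lie below $q$ by ruling out every alternative with rank counting.

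First fix a maximal chain $y_0<y_1<\cdots<y_n$ of $P$ with $y_{r+1}=q$. Purity makes this possible: extend a longest chain ending at $q$, of length $r+1$, to a maximal chain. Purity then shows that each element of the chain has rank equal to its index; any chain below $y_k$ longer than $k$ would yield a longer maximal chain. So $y_r$ has rank $r$ and $y_r<q$.

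If $p=y_r$ we are done. Otherwise apply the hypothesis to the pair $y_r<y_{r+1}=q$: the element $p$ is comparable to $y_r$ or to $q$.
\begin{itemize}
\item If $p$ is comparable to $y_r$, then $p\ne y_r$ together with equal ranks is impossible, since comparable distinct elements have strictly different ranks.
\item Hence $p$ is comparable to $q$. The case $q<p$ would give $\rank(p)>\rank(q)$, a contradiction. Therefore $p<q$.
\end{itemize}

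The main obstacle is the rank bookkeeping, namely that in a pure poset $\rank_P(y_k)=k$ along a maximal chain and that $x<y$ implies $\rank x<\rank y$. The second fact is immediate from the definition of rank as the length of a longest chain ending at the element. The first follows from purity as indicated above: if $y_k$ had rank larger than $k$, replace $y_0<\cdots<y_k$ by a longer chain ending at $y_k$ and keep $y_{k+1}<\cdots<y_n$. This produces a chain longer than the maximal chain, and extending it to a maximal chain contradicts purity.
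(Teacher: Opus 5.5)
Your proof is correct and is essentially the paper's argument run in the dual direction. The paper takes $p'>p$ with $\rank_P(p')=\rank_P(p)+1$ and applies the hypothesis to the pair $p<p'$ with third element $q$; you instead take $y_r<q$ with $\rank_P(y_r)=r$ and apply it to $y_r<q$ with third element $p$. A small advantage of your direction is that it needs no purity at all. A longest chain $y_0<\cdots<y_r<q$ ending at $q$ already gives $r\le\rank_P(y_r)<\rank_P(q)=r+1$. By contrast, the paper's choice of $p'$ does use purity, since without it $p$ could be maximal or covered only by elements of larger rank.
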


\begin{proof}
Suppose that  $p\not <_Pq$. Then $p$ and $q$ are not comparable in $P$. Since $P$ is pure, there is $p'\in P$ with $\rank_P(p')=\rank_P(p)+1$ with $p<p'$. It follows that $p'$ and $q$ are comparable in $P$, which is a contradiction, as $\rank_P(p')=\rank_P(q)$.
\end{proof}

\begin{Lemma} \label{vertdec}
Let $n\geq 2$ be an integer and $X$ a nonempty subset of $[n]=\{1,\ldots, n\}$.  Define $\Delta_{n,X}$ to be the simplicial complex whose facets are those $(n-1)$-subsets $W$ of $[n]$ with $X \not\subset W$.  Then $\Delta_{n,X}$ is vertex decomposable.
\end{Lemma}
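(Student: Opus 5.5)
Our plan is to argue by induction on $n + |X|$, choosing a vertex $x \in X$ and checking the two conditions in the definition of vertex decomposability directly. Throughout we use that the facets of $\Delta_{n,X}$ are the sets $[n]\setminus\{i\}$ with $i \in X$. There are $|X|$ of them, all of dimension $n-2$, so $\Delta_{n,X}$ is pure.

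\textbf{Base cases.} If $|X| = 1$, then $\Delta_{n,X}$ has a single facet, so it is a simplex and hence vertex decomposable. For $n = 2$ and $X = \{1,2\}$, the complex consists of two isolated points, which is vertex decomposable. Indeed, for $x = 1$ the deletion is the single point $\{2\}$, a simplex, and the link is $\{\emptyset\}$, which we regard as the $(-1)$-dimensional simplex. No face of the link is a facet of the deletion.

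\textbf{Inductive step.} Suppose $|X| \geq 2$ and fix $x \in X$.

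\emph{Link.} The link $\link_{\Delta}\{x\}$ has as facets the sets $[n]\setminus\{i,x\}$ with $i \in X\setminus\{x\}$. After identifying $[n]\setminus\{x\}$ with $[n-1]$, this is exactly $\Delta_{n-1, X\setminus\{x\}}$. Here $X\setminus\{x\}$ is nonempty, so the link is vertex decomposable by induction on $n$. (When $n-1 = 1$, it is a point, hence a simplex.)

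\emph{Deletion.} The faces of $\Delta - x$ are the subsets of $[n]\setminus\{x\}$ that lie in some facet $[n]\setminus\{i\}$. Since $|X| \geq 2$, we can take $i = x$, so every subset of $[n]\setminus\{x\}$ is a face. Hence $\Delta - x$ is the full simplex on $[n]\setminus\{x\}$, of dimension $n-2$, which is vertex decomposable. Note that it has the same dimension as $\Delta$, so the deletion stays pure of the right dimension, as the definition requires.

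\emph{Condition (ii).} The only facet of $\Delta - x$ is $[n]\setminus\{x\}$, which has $n-1$ elements. Every face of the link has at most $n-2$ elements. So no face of the link is a facet of $\Delta - x$.

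\textbf{Main obstacle.} The only delicate point is the degenerate small cases: the empty complex $\{\emptyset\}$ must count as a simplex, and a vertex must be a simplex for the $n=2$ step. We handle these by the convention above.

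Finally, a remark on the definition in the paper. It requires $\Delta - x$ and the link to be vertex decomposable, but the complexes arising in the induction are all either simplices or of the form $\Delta_{m,Y}$. So the induction closes within the family $\{\Delta_{m,Y}\}$ together with simplices, and the proof is complete.
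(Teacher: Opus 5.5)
Your proof is correct and is essentially the paper's argument: induct on $n$, pick $x\in X$, identify $\link_{\Delta}\{x\}$ with $\Delta_{n-1,X\setminus\{x\}}$, observe that $\Delta-x$ is the full simplex on $[n]\setminus\{x\}$, and check condition (ii) by counting cardinalities. Your separate base case $|X|=1$ handles a degenerate situation the paper passes over (there $x$ is not a vertex and $X\setminus\{x\}=\emptyset$); the one slip is the parenthetical claim that the link is a point when $n-1=1$ --- it is actually $\{\emptyset\}$ --- but your $n=2$ base case already covers that.
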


\begin{proof}
We proceed by induction on $n$.  Our assertion is trivial for $n=2$. Let $n\geq 3$ and $x\in X$. We easily see that $\link_{\Delta_{n,X}} \{x\}=\Delta_{n-1,X\setminus\{x\}}$. Our induction hypothesis says that  $\link_{\Delta_{n,X}} \{x\}$ is vertex decomposable.  Since $\Delta_{n,X} -x$ is a simplex on $n-1$ vertices, it is vertex decomposable.  Furthermore, since each facet of $\Delta_{n,X} -x$ is a facet of $\Delta_{n,X}$, it follows that $\Delta_{n,X}$ is vertex decomposable, as desired.
\, \, \, \, \,
\end{proof}

\begin{Lemma} \label{vertdecbool}
Let $L$ be a boolean lattice and $a \in L$. Then $L_a$ is vertex decomposable.
\end{Lemma}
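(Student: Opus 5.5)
Identify $L$ with the boolean lattice $B_n$ of subsets of $[n]$, so that $a$ is a subset $A\subseteq[n]$ and
\[
L_a=\{S\subseteq[n] : A\not\subset S\}.
\]
If $A=\emptyset$, then $L_a$ is empty and there is nothing to prove, so assume $A\neq\emptyset$. The plan is to reduce to Lemma \ref{vertdec} by passing through the barycentric subdivision and using the standard fact that vertex decomposability is preserved under it.

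\textbf{Step 1: identify $L_a$ with a face poset.} The elements of $L_a$ are exactly the subsets of $[n]$ that do not contain $A$. This family is closed under taking subsets, so it is a simplicial complex (with the empty set playing the role of ${\hat 0}$). Its facets are:
\begin{itemize}
\item[(a)] the $(n-1)$-subsets $W=[n]\setminus\{i\}$ with $i\in A$, when $|A|\geq 1$ and $n\geq 2$;
\item[(b)] for $n=1$, only the empty set.
\end{itemize}
Thus $L_a$ is the face poset (including $\emptyset$) of the complex $\Delta_{n,A}$ of Lemma \ref{vertdec}. Under the ordering by inclusion it is also isomorphic to $[n]\setminus\{i\}$-type boolean pieces glued along faces.

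\textbf{Step 2: split off the cone point.} The order complex $\Delta(L_a)$ is a cone with apex $\emptyset={\hat 0}$ over $\Delta(L_a\setminus\{\emptyset\})$. The latter is the barycentric subdivision $\operatorname{sd}(\Delta_{n,A})$. A cone over a vertex decomposable complex is vertex decomposable: decompose at the same vertices, so that links and deletions are again cones, and the base case is that a cone over a simplex is a simplex. So it suffices to show that $\operatorname{sd}(\Delta_{n,A})$ is vertex decomposable.

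\textbf{Step 3: barycentric subdivision.} The key input is the theorem of Provan and Billera that the barycentric subdivision of a shellable complex is vertex decomposable. Lemma \ref{vertdec} gives that $\Delta_{n,A}$ is vertex decomposable, hence shellable, for $n\geq 2$. The cases $n=1$ and $n=2$ are checked directly: $L_a$ is then a point, or a chain, or two chains glued at ${\hat 0}$, which is a cone over two points.

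\textbf{Main obstacle.} Step 3 relies on an external theorem. If a self-contained proof is preferred, I would argue directly by induction on $n$, shedding the vertices of $\Delta(L_a)$ corresponding to the sets $S\supseteq A\setminus\{j\}$ in decreasing order of size. Links of chains in a boolean-type poset split as joins of order complexes of smaller intervals and smaller $L_{a'}$'s. Verifying condition (ii), that no face of the link is a facet of the deletion, would be the delicate bookkeeping; this is where I expect most of the work. I would therefore favour the citation of Provan and Billera.
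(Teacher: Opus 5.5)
Your proposal is correct and follows essentially the paper's route: you identify $L_a$ with the face poset of $\Delta_{n,A}$, use Lemma~\ref{vertdec} to get shellability, and pass to the barycentric subdivision via the Provan--Billera theorem, where the paper cites its nonpure extension \cite[Corollary 11.7]{BjWa}. Your Step~2, which notes that $\Delta(L_a)$ is a cone with apex $\hat 0$ over the barycentric subdivision, makes explicit a point the paper glosses over, since the paper simply calls $\Delta(L_a)$ the barycentric subdivision of $\Delta_{n,X}$.
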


\begin{proof}
There is nothing to prove for $a=\hat{0}$. So, suppose that $a\neq \hat{0}$. Let $L=B_n$.  So, $a=X$ for a nonempty subset $X$ of $[n]$. The order complex of $L_a$ is the barycentric subdivision of the simplicial complex $\Delta_{n,X}$ as in Lemma \ref{vertdec}. The purity of the order complex of $L_a$ follows from the purity of $\Delta_{n,X}$ and its vertex decomposability is a consequence of \cite[Corollary 11.7]{BjWa} and Lemma \ref{vertdec}.
\, \, \, \,
\end{proof}

\begin{Lemma}
    \label{clutter}
    The following conditions on a poset $P$ are equivalent:
\begin{itemize}
\item [(i)] If $p_1<p_2$ in $P$, then each $q\in P$ is comparable to either $p_1$ or $p_2$.

\item [(ii)] $P$ is pure and satisfies that if $p,q \in P$ with ${\rm rank}_P(p)<{\rm rank}_P(q)$, then $p<q$.

\item [(iii)] $P$ is the sum \cite[p.~246]{EC1} of antichains \cite[p.~201]{HHgtm260}.
\end{itemize}
\end{Lemma}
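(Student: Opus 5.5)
I will prove the cycle (i) $\Rightarrow$ (ii) $\Rightarrow$ (iii) $\Rightarrow$ (i).

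\emph{(i) $\Rightarrow$ (ii).} By Lemma \ref{graded}, condition (i) forces $P$ to be pure. Let $p,q\in P$ with $\rank_P(p)=r<s=\rank_P(q)$. Since $P$ is pure, $q$ lies on a maximal chain, and that chain contains an element $p'$ of rank $r+1$ with $p'\le q$. (If $s=r+1$, then $p'=q$.) By Lemma \ref{comprank}, $p<p'$ whenever $\rank_P(p')=\rank_P(p)+1$. Hence $p<p'\le q$. Strictly, I would do this by induction on $s-r$: Lemma \ref{comprank} gives the base case, and for the step I take $q'<q$ of rank $s-1$ on a saturated chain below $q$ and apply the induction hypothesis to $p$ and $q'$.

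\emph{(ii) $\Rightarrow$ (iii).} Let $A_k=\{x\in P:\rank_P(x)=k\}$ for $0\le k\le \rank(P)$. Each $A_k$ is an antichain, because comparable distinct elements have distinct ranks. By (ii), every element of $A_k$ is below every element of $A_l$ whenever $k<l$. This is exactly the ordinal sum $A_0\oplus A_1\oplus\cdots\oplus A_{\rank(P)}$. I will check that the order on $P$ coincides with the sum order. Elements in the same $A_k$ are incomparable. For $x\in A_k$ and $y\in A_l$ with $k<l$, we have $x<y$ by (ii), and $y<x$ is impossible since rank is strictly monotone.

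\emph{(iii) $\Rightarrow$ (i).} Write $P=A_1\oplus\cdots\oplus A_t$, where each $A_i$ is an antichain. Suppose $p_1<p_2$. Then $p_1\in A_i$ and $p_2\in A_j$ with $i<j$, since elements inside one antichain are incomparable. Take any $q\in A_k$. If $k\ne i$, then $q$ is comparable to $p_1$. If $k=i$, then $k<j$, so $q<p_2$.

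The only step needing any care is the induction in (i) $\Rightarrow$ (ii) that extends Lemma \ref{comprank} from consecutive ranks to arbitrary rank gaps. It relies on purity, which ensures that a saturated chain below $q$ passes through every rank from $0$ up to $\rank_P(q)$. Everything else is a direct verification.
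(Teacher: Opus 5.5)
Your proof is correct and follows the same cycle (i) $\Rightarrow$ (ii) $\Rightarrow$ (iii) $\Rightarrow$ (i) as the paper. The paper simply cites Lemmata \ref{graded} and \ref{comprank} for the first implication. You make explicit the passage from consecutive ranks to arbitrary rank gaps, which the paper leaves implicit. Your direct argument, $p<p'\le q$ along a maximal chain through $q$, already suffices on its own, so the induction is optional.
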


\begin{proof} First, (i) $\Rightarrow$ (ii) follows from Lemmata \ref{graded} and \ref{comprank}.  Let $P$ be a poset which satisfies (ii) and $A_i$ the antichain of $P$ consisting of those $x \in P$ with $\rank_P(x) = i$.  If $x \in A_i$ and $y \in A_j$ with $i < j$, then $x < y$.  Thus $P$ is the sum of antichains $A_0, A_1, \ldots, A_{\rank(P)}$, as desired.  Finally, (iii) $\Rightarrow$ (i) is clear.
\, \, \, \, \, \, \, \, \, \, \, \, \,
\end{proof}

We now come to a complete solution of Problem \ref{L_a}.

\begin{Theorem}
\label{L_a:classification}
Let $L=\Jc(P)$ be a distributive lattice. Then the following conditions are equivalent:

\begin{itemize}
\item [(i)] $P$ is the sum of antichains.

\item [(ii)] $L_a$ is vertex decomposable for each $a\in L$.

\item [(iii)] $L_a$ is shellable for each $a\in L$.

\item [(iv)] $L_a$ is Cohen--Macaulay for each $a\in L$.
\end{itemize}
\end{Theorem}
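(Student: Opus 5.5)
The implications (ii) $\Rightarrow$ (iii) $\Rightarrow$ (iv) are standard: every vertex decomposable complex is shellable, and every shellable complex is Cohen--Macaulay, as recalled in Section 1. For (iv) $\Rightarrow$ (i), argue by contraposition. If $P$ is not the sum of antichains, then by Lemma \ref{clutter} condition (i) of that lemma fails. So there are $p_1<p_2$ in $P$ and $q_1\in P$ comparable to neither $p_1$ nor $p_2$. Lemma \ref{forbidden3} then produces $a\in L$ with $L_a$ not Cohen--Macaulay, contradicting (iv).

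The substantive direction is (i) $\Rightarrow$ (ii). Let $P=A_0\oplus A_1\oplus\cdots\oplus A_r$ be a sum of antichains. Then $L=\Jc(P)$ is an ordinal sum of boolean lattices glued at endpoints. Precisely, $L$ is the union of intervals $[\,\hat 0, B_0], [B_0,B_1],\ldots$, where $B_i=A_0\cup\cdots\cup A_i$ and $[B_{i-1},B_i]\cong B_{|A_i|}$. Moreover, every element of $L$ is comparable with every $B_i$. Given $a\in L$ with $a\neq\hat 0$, let $i$ be the index with $B_{i-1}\subsetneq a\subseteq B_i$. Then $L_a$ consists of all ideals $x\subseteq B_{i-1}$ together with those $x\in[B_{i-1},B_i]$ not containing $a$. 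In other words, $L_a$ is the ordinal sum of the chain-like lower part $[\hat 0,B_{i-1}]$ and the poset $(B_{|A_i|})_{a\setminus B_{i-1}}$ computed in the top boolean block.

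Accordingly, $\Delta(L_a)$ is the join of $\Delta([\hat 0,B_{i-1}])$ and $\Delta\bigl((B_{|A_i|})_{a'}\bigr)$ with $a'=a\setminus B_{i-1}$. The first factor is the order complex of a distributive lattice. That lattice is itself an ordinal sum of boolean lattices, which is shellable, and in fact vertex decomposable: either because the order complex of a distributive lattice is vertex decomposable, or directly as an iterated join of barycentric subdivisions of simplices glued at cone points. The second factor is vertex decomposable by Lemma \ref{vertdecbool}. Since a join of vertex decomposable complexes is vertex decomposable (Provan--Billera), $\Delta(L_a)$ is vertex decomposable.

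The main obstacle is pinning down the join decomposition cleanly. The lower part contains $\hat 0$ and $B_{i-1}$, which are cone points, so I will treat each boolean block $[B_{j-1},B_j]$ separately. Its order complex is a join of cone points and barycentric subdivisions of boundaries, or of full simplices. I will also check the degenerate cases: $a=\hat 0$, where $L_a$ is empty, and $a=B_{i-1}\cup\{\text{single element}\}$, where Lemma \ref{vertdec} needs $n\ge 2$. When $|A_i|=1$, the top block contributes just $\{B_{i-1}\}$, which is trivially fine.
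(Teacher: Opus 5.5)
Your proposal is correct and follows essentially the same route as the paper: (iv)$\Rightarrow$(i) via Lemmas \ref{clutter} and \ref{forbidden3}, and (i)$\Rightarrow$(ii) by viewing $L_a$ as an ordinal sum of boolean blocks topped by $(B_{|A_i|})_{a'}$, then combining Lemma \ref{vertdecbool} with the fact that joins (equivalently, ordinal sums of posets) preserve vertex decomposability. One detail to tidy: the two pieces share $B_{i-1}$, so the join is really $\Delta([\hat 0,B_{i-1}])*\Delta\bigl((B_{|A_i|})_{a'}\setminus\{\hat 0\}\bigr)$, and its second factor is still vertex decomposable because it is the link of the cone point $\hat 0$ in a vertex decomposable complex.
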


\begin{proof}  First, (ii) $\Rightarrow$ (iii) $\Rightarrow$ (iv) are true in general.  Second, (iv) $\Rightarrow$ (i) follows from Lemmata \ref{forbidden3} and \ref{clutter}.  On the other hand, (i) $\Rightarrow$ (ii) follows from Lemma \ref{vertdecbool} together with the fact that the sum of (pure and) vertex decomposable posets is (pure and) vertex decomposable.
\, \, \, \, \, \, \, \, \, \, \, \, \, \, \, \, \, \, \, \, \, \, \, \, \, \, \, \, \, \, \, \, \, \, \, \, \, \, \, \,
\end{proof}

Let $\hat{1}$ denote the unique maximal element of a given lattice $L$. An {\em apex} of a lattice $L$ is $a \in L$ with $a \neq {\hat 0}$ and $a \neq {\hat 1}$ for which $a$ is comparable to all $x \in L$.  A lattice with no apex is called {\em simple}.

\begin{Corollary}
\label{L_a:classificationCOR}
Let $L$ be a simple distributive lattice. Then the following conditions are equivalent:

\begin{itemize}
\item [(i)] $L$ is boolean.

\item [(ii)] $L_a$ is vertex decomposable for each $a\in L$.

\item [(iii)] $L_a$ is shellable for each $a\in L$.

\item [(iv)] $L_a$ is Cohen--Macaulay for each $a\in L$.
\end{itemize}
\end{Corollary}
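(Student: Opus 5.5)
The plan is to reduce the corollary to Theorem \ref{L_a:classification}. It then suffices to show that, for a simple distributive lattice $L=\Jc(P)$, the poset $P$ is a sum of antichains if and only if $P$ is a single antichain, i.e.\ $L$ is boolean. Once this is established, conditions (ii), (iii), (iv) of the corollary are literally conditions (ii), (iii), (iv) of Theorem \ref{L_a:classification}, and condition (i) of the theorem becomes ``$L$ is boolean''.

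The direction ``boolean $\Rightarrow$ sum of antichains'' is trivial. If $L=B_n$, then $P$ is an antichain with $n$ elements, which is a sum of one antichain.

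For the converse, suppose $P=A_0\oplus A_1\oplus\cdots\oplus A_r$ is the ordinal sum of nonempty antichains with $r\geq 1$. I claim that $a:=A_0$, viewed as a poset ideal of $P$, is an apex of $L$. First, $a\neq\emptyset={\hat 0}$ and $a\neq P={\hat 1}$, since $A_r$ is nonempty and disjoint from $A_0$. Next, take any poset ideal $x$ of $P$. If $x\not\subseteq a$, then $x$ contains some element of $A_j$ with $j\geq 1$. Every element of $A_0$ lies below that element, so $A_0\subseteq x$. Hence $x\subseteq a$ or $a\subseteq x$, so $a$ is comparable to every element of $L$, and $L$ is not simple. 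Therefore simplicity forces $r=0$, so $P$ is an antichain and $L=\Jc(P)$ is boolean.

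The only delicate point is bookkeeping. One should confirm that the decomposition in Lemma \ref{clutter}(iii) can be taken with nonempty summands, which is automatic by discarding empty ones. One should also handle the degenerate case $P=\emptyset$, where $L$ is a one-point lattice and is trivially boolean. No further obstacles are expected.
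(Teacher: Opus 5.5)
Your proposal is correct and is essentially the argument the paper leaves implicit: the corollary is read off from Theorem \ref{L_a:classification}, and the one thing to check is that for simple $L=\Jc(P)$, ``$P$ is an (ordinal) sum of antichains'' forces a single antichain. You do exactly this by showing that if there are at least two nonempty levels, the ideal $A_0$ is an apex.
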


\section{Question \ref{CMshellable} and Problem \ref{L-I}}

We now turn to the discussion of Question \ref{CMshellable} and Problem \ref{L-I}.

\begin{Lemma}
\label{NEW1}
Let $L=D_{2^n\cdot 3^m}$ and $I$ a dual order ideal of $L$.  Suppose that
\[
2^{\alpha_1}3^{\beta_1}, \ldots, 2^{\alpha_k}3^{\beta_k}
\]
are the minimal elements of $I$, where
\[
\alpha_1> \cdots >\alpha_k, \quad \beta_1<\cdots <\beta_k.
\]

(a) If $1\leq \alpha_1\leq n-1$ and $1 \leq \beta_1$, then $L\setminus I$ is not Cohen--Macaulay.

(b) If $\alpha_1 = n$ and $1 \leq \beta_1$ and if $L\setminus I$ is Cohen--Macaulay, then
$I$ is rank-fixed.
\end{Lemma}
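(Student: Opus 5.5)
The plan is to view $L=D_{2^n\cdot 3^m}$ as the grid $[0,n]\times[0,m]$, identifying $2^x3^y$ with $(x,y)$, so that $L\setminus I$ is a staircase order ideal of the grid. The whole argument rests on one local obstruction, modelled on the proof of Lemma~\ref{forbidden3}.

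Since the minimal elements of $I$ are pairwise incomparable, for each $i$ with $\alpha_i\geq 1$ and $\beta_i\geq 1$ the element $u_i=(\alpha_i-1,\beta_i-1)$ lies in $L\setminus I$. Let $F_i$ be a saturated chain of $L$ from $\hat 0$ to $u_i$. It lies in $L\setminus I$, because $L\setminus I$ is an order ideal. Then $\link_{\Delta(L\setminus I)} F_i$ is the order complex of the set of elements of $L\setminus I$ strictly above $u_i$. An element $(x,y)>u_i$ with $x\geq\alpha_i$ and $y\geq\beta_i$ lies in $I$. Hence this upper set is the disjoint union of two chains:
\begin{itemize}
\item a \emph{right branch} $\{(x,\beta_i-1): \alpha_i\le x\le \alpha_{i-1}-1\}$, where we set $\alpha_0-1:=n$;
\item an \emph{up branch} $\{(\alpha_i-1,y): \beta_i\le y\le \beta_{i+1}-1\}$, where we set $\beta_{k+1}-1:=m$.
\end{itemize}
We must check that no element of one branch lies above an element of the other, and that these are all the elements of $L\setminus I$ above $u_i$. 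Both facts follow from the staircase description: an element above both branches would dominate $(\alpha_i,\beta_i)$. The right branch is always nonempty. The up branch is nonempty because $\beta_i\le\beta_{i+1}-1$ (or $\beta_k\le m$). So the link is a disconnected complex whose dimension is at least $1$ as soon as one branch has at least two elements. The key fact I use is that such a complex is not Cohen--Macaulay: a Cohen--Macaulay complex of positive dimension is connected, which is \cite[Lemma 9.1.12]{HHgtm260} as used in Lemma~\ref{forbidden3}. Since links of Cohen--Macaulay complexes are Cohen--Macaulay, Cohen--Macaulayness of $L\setminus I$ forces both branches at every $u_i$ to be singletons.

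For (a), take $i=1$. The right branch is $\{(x,\beta_1-1):\alpha_1\le x\le n\}$, which has $n-\alpha_1+1\geq 2$ elements because $\alpha_1\leq n-1$. Hence $L\setminus I$ is not Cohen--Macaulay.

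For (b), with $\alpha_1=n$, apply the singleton condition at every $u_i$ with $\alpha_i\geq1$; the hypothesis $\beta_1\ge1$ gives $\beta_i\ge1$ for all $i$. The right branches give $\alpha_{i-1}-\alpha_i=1$ for $2\le i\le k$. The up branches give $\beta_{i+1}-\beta_i=1$ for those $i\le k-1$ with $\alpha_i\geq 1$, which is every $i\le k-1$ since $\alpha_i>\alpha_k\ge0$. If moreover $\alpha_k\geq1$, the up branch at $u_k$ gives $\beta_k=m$. It follows that every minimal element has rank $\alpha_i+\beta_i=n+\beta_1=:r$, and consecutive minimal elements are adjacent on the rank-$r$ level. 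The first one is $(n,r-n)$, and the last one is either $(r-m,m)$ or $(0,r)$. These are exactly the two endpoints of the antichain of rank-$r$ elements of the grid. So the minimal elements of $I$ are precisely all elements of rank $r$, that is, $I$ is rank-fixed.

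The step I expect to need the most care is the exact description of the upper set above $u_i$. One must verify that it is precisely these two branches, with no element comparable across them and none missing. One must also handle the boundary cases correctly: $i=1$ (where the right branch reaches $x=n$), $i=k$ (where the up branch reaches $y=m$), and $\alpha_k=0$ (where $u_k$ does not exist and no condition is needed).
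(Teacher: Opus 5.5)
Your local analysis is the same as the paper's. The paper also looks at the elements of $L\setminus I$ above $2^{\alpha_i-1}3^{\beta_i-1}$, finds two chains meeting only at the bottom, and gets (a) from that shape and (b) from purity. Your link-plus-disconnectedness formulation is a clean version of this. Your proof of (a), your description of the two branches, and the case $\alpha_k\ge1$ of (b) are correct.

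\textbf{The gap is in (b) when $\alpha_k=0$.} You claim the right branches give $\alpha_{i-1}-\alpha_i=1$ for all $2\le i\le k$. At $i=k$ this uses the element $u_k$, which does not exist when $\alpha_k=0$. Your remark that in this case ``no condition is needed'' is exactly where the argument breaks: reaching the endpoint $(0,r)$ requires $\alpha_{k-1}=1$, and nothing forces it.

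No argument can repair this, because (b) is false as stated. Let $n=m=2$, so $L=D_{36}$, and let $I$ be generated by $12=2^2\cdot 3$ and $9=3^2$.
\begin{itemize}
\item Here $\alpha_1=n$ and $\beta_1=1$, so the hypotheses hold.
\item $L\setminus I=\{1,2,3,4,6\}$. Its order complex is the cone with apex $1$ over the connected graph with edges $\{2,4\},\{2,6\},\{3,6\}$, hence Cohen--Macaulay.
\item The minimal elements $12$ and $9$ of $I$ have ranks $3$ and $2$, so $I$ is not rank-fixed.
\end{itemize}
The paper's proof has the same slip. Purity above $2^{n-1}3^{\beta_1-1}$ gives $\beta_2=\beta_1+1$, but not $\alpha_2=n-1$; in this example $\alpha_2=0$.

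What your argument does prove when $\alpha_k=0$ and $k\ge2$ is the following.
\begin{itemize}
\item The elements $u_1,\ldots,u_{k-1}$ all exist, and the conditions there make $(\alpha_1,\beta_1),\ldots,(\alpha_{k-1},\beta_{k-1})$ consecutive on the rank level $n+\beta_1$. They also give $\beta_k=\beta_{k-1}+1$.
\item Since $3^{\beta_k}\in I$, every element of $L\setminus I$ lies in $D_{2^n\cdot 3^{\beta_k-1}}$.
\item Hence $L\setminus I=D_{2^n\cdot 3^{\beta_k-1}}\setminus I''$, where $I''$ is generated by the first $k-1$ minimal elements. $I''$ is rank-fixed in that smaller lattice, because its first generator has maximal $2$-exponent $n$ and its last generator now has maximal $3$-exponent $\beta_{k-1}=\beta_k-1$.
\end{itemize}
So you should either add the hypothesis $\alpha_k\ge1$ to (b), or state its conclusion in this form. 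The latter is exactly what Corollary~\ref{NEW2} uses.
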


\begin{proof}
(a) The interval $L'=\{x \in L \setminus I : x \geq 2^{\alpha_1-1}3^{\beta_1-1}\}$ of $L \setminus I$ is the union of chains $C$ and $C'$ with $C \cap C' = 2^{\alpha_1-1}3^{\beta_1-1}$ and $|C| \geq 3$. Hence, $L'$ is not be Cohen--Macaulay.  It then follows that $L \setminus I$ is not Cohen--Macaulay, as desired.

(b) Since $L\setminus I$ is Cohen--Macaulay, the interval $L'=\{x \in L \setminus I : x \geq 2^{n-1}3^{\beta_1-1}\}$ of $L \setminus I$ is pure.  Thus $\alpha_2 = n-1$ and $\beta_2 = \beta_1+1$.  Repeating the procedure yields that $I$ is rank-fixed.
\hspace{10cm}
\end{proof}

It follows from Lemma \ref{NEW1} that the remaining issue is $2^{\alpha_1}3^{\beta_1}= 2^{n'}$ with $1 \leq n' \leq n$.  Furthermore, if $2^{\alpha_1}3^{\beta_1}= 2^{n'}$, then $L \setminus I = L'\setminus I'$, where $L'=D_{2^{n'-1}\cdot 3^m}$ and $I'$ is a dual order ideal of $L'$.  On the other hand, if $\alpha_1 =0$, then $L \setminus I = D_{2^{n}\cdot 3^{\beta_1-1}}$.

\begin{Corollary}
    \label{NEW2}
Let $L=D_{2^n\cdot 3^m}$ and $I$ a dual order ideal of $L$. Then the following conditions are equivalent:
\begin{itemize}
\item [(i)]
$L\setminus I$ is a sublattice of $L$ of the form $D_{2^{n'}\cdot 3^{m'}} \setminus I'$, where $n' \leq n, m' \leq m$ and where $I'$ is a rank-fixed dual order ideal of $D_{2^{n'}\cdot 3^{m'}}$.

\item [(ii)] $L \setminus I$ is Cohen--Macaulay.
\end{itemize}
\end{Corollary}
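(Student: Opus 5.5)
We prove the equivalence by induction on $n+m$, using Lemma \ref{NEW1} and the reductions recorded right after it. Write the minimal elements of $I$ as in Lemma \ref{NEW1}, with $\alpha_1>\cdots>\alpha_k$ and $\beta_1<\cdots<\beta_k$. If $I=\emptyset$, then $L\setminus I=L$ is Cohen--Macaulay, and it has the form in (i) with the empty dual order ideal (which we regard as the degenerate rank-fixed case, taking the rank to be one more than $\rank L$). So assume $I\neq\emptyset$.

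\textbf{(i) $\Rightarrow$ (ii).} Let $L\setminus I = D_{2^{n'}3^{m'}}\setminus I'$ with $I'$ rank-fixed. The poset $D_{2^{n'}3^{m'}}$ is a finite distributive lattice. By \cite[Theorem 6.4]{Bac}, as recalled in the Introduction, removing a rank-fixed dual order ideal from it leaves a Cohen--Macaulay poset. Since $L\setminus I$ coincides with this poset, it is Cohen--Macaulay.

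\textbf{(ii) $\Rightarrow$ (i).} Assume $L\setminus I$ is Cohen--Macaulay. The case analysis is on $(\alpha_1,\beta_1)$.
\begin{itemize}
\item If $1\le \alpha_1\le n-1$ and $\beta_1\ge 1$, Lemma \ref{NEW1}(a) says $L\setminus I$ is not Cohen--Macaulay, so this case cannot occur.
\item If $\alpha_1=n$ and $\beta_1\ge1$, Lemma \ref{NEW1}(b) says $I$ itself is rank-fixed, and (i) holds with $n'=n$, $m'=m$.
\item If $\alpha_1=0$, then $k=1$ and $L\setminus I=D_{2^n3^{\beta_1-1}}$. This is of the form (i) with empty $I'$.
\item If $\beta_1=0$ and $\alpha_1=n'\ge1$, then $2^{n'}\in I$, so every element of $L\setminus I$ has $2$-exponent at most $n'-1$. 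Hence $L\setminus I=L''\setminus I''$, where $L''=D_{2^{n'-1}3^m}$ and $I''=I\cap L''$ is a dual order ideal of $L''$.
\end{itemize}
In the last case $(n'-1)+m<n+m$, so the induction hypothesis applied to $L''\setminus I''$ gives the form (i) inside $L''$, and hence inside $L$.

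It remains to check that Lemma \ref{NEW1} applies verbatim in all the cases above. Part (a) needs $\alpha_1\ge1$ in order to form $2^{\alpha_1-1}3^{\beta_1-1}$, and it is invoked only when $\alpha_1\ge1$ and $\beta_1\ge1$, so this is fine. I expect the main obstacle to be the rigor of the sketches inside Lemma \ref{NEW1} itself, especially the phrase ``repeating the procedure'' in (b). If needed, I would redo (b) as a small induction on $k$. At each step, purity of the interval of $L\setminus I$ above $2^{\alpha_i-1}3^{\beta_i-1}$ forces $\alpha_{i+1}=\alpha_i-1$ and $\beta_{i+1}=\beta_i+1$. The last element must then reach the boundary $\alpha_k=0$ or $\beta_k=m$, and otherwise purity fails at that corner. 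This shows the minimal elements all lie on a single rank.
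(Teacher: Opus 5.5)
Your case analysis is the one the paper sketches after Lemma \ref{NEW1}, but there is a genuine gap in the step ``if $\alpha_1=n$ and $\beta_1\ge 1$, then $I$ itself is rank-fixed and (i) holds with $n'=n$, $m'=m$''. This step fails because Lemma \ref{NEW1}(b) is false as stated when the last minimal element is a pure power of $3$, i.e.\ $\alpha_k=0$ with $k\ge 2$. The smallest example is $L=D_{36}$ with $I$ generated by $12=2^2\cdot 3$ and $9=3^2$, so $\alpha_1=n=2$ and $\beta_1=1$. Then $L\setminus I=\{1,2,3,4,6\}$. Its order complex is the cone with apex $1$ over the path with edges $\{2,4\},\{2,6\},\{3,6\}$, which is Cohen--Macaulay. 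But $12$ and $9$ have ranks $3$ and $2$, so $I$ is not rank-fixed, and (i) holds only with $m'=1$, namely $L\setminus I=D_{12}\setminus\{12\}$.

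Your proposed repair has the same hole. The part of $L\setminus I$ above the corner $2^{\alpha_i-1}3^{\beta_i-1}$ controls $\alpha_{i-1}-\alpha_i$ and $\beta_{i+1}-\beta_i$, not $\alpha_{i+1}$. The relation $\alpha_{i+1}=\alpha_i-1$ can only come from the next corner, which does not exist when $\alpha_{i+1}=0$. So ``reaching the boundary $\alpha_k=0$'' does not put $3^{\beta_k}$ on the common rank. Purity alone is also too weak at the later corners. In $D_{2^3\cdot 3^3}$ with $I$ generated by $2^3\cdot 3$ and $2\cdot 3^2$, the poset $L\setminus I$ is pure and so is every such upper interval. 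Yet above $3$ it consists of the chains $3<6<12$ and $3<9<27$ meeting only at $3$, and $\alpha_2=1\neq\alpha_1-1$.

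To close the gap, add the reduction symmetric to your case $\beta_1=0$. If $\alpha_k=0$, then $3^{\beta_k}\in I$, so $L\setminus I=D_{2^n3^{\beta_k-1}}\setminus(I\cap D_{2^n3^{\beta_k-1}})$, and induction applies since $\beta_k-1<m$. The degenerate case $I=L$ is trivial.

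In the remaining case all $\alpha_i,\beta_i\ge 1$, and each $c_i=2^{\alpha_i-1}3^{\beta_i-1}$ lies in $L\setminus I$. Set $\alpha_0:=n+1$ and $\beta_{k+1}:=m+1$. Then $\{x\in L\setminus I: x>c_i\}$ is the disjoint union of the nonempty chains $\{2^a3^{\beta_i-1}:\alpha_i\le a<\alpha_{i-1}\}$ and $\{2^{\alpha_i-1}3^b:\beta_i\le b<\beta_{i+1}\}$. Its order complex is the link in $\Delta(L\setminus I)$ of a maximal chain of $[\hat 0,c_i]$. A disjoint union of two simplices is Cohen--Macaulay only if both are points, so you need connectedness of links here, not just purity. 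Hence $\alpha_1=n$, $\beta_k=m$, and $\alpha_{i-1}=\alpha_i+1$, $\beta_{i+1}=\beta_i+1$ for all $i$. So the minimal elements run from $2^n3^{\beta_1}$ to $2^{\alpha_k}3^m$ and are exactly the elements of rank $n+\beta_1$, i.e.\ $I$ is rank-fixed. This argument also subsumes Lemma \ref{NEW1}(a). Your direction (i)$\Rightarrow$(ii) via \cite{Bac} is fine.
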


\begin{Lemma} \label{divvertdecom}
Let $L=D_{2^n\cdot 3^m}$ and $I$ a rank-fixed dual order ideal of $L$. Then $L\setminus I$ is vertex decomposable.
\end{Lemma}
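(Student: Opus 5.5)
Write $L=D_{2^n\cdot 3^m}$, identified with the grid $[0,n]\times[0,m]$ with componentwise order. Let $I$ be the rank-fixed dual order ideal whose minimal elements are all elements of rank $r$. Then $L\setminus I$ consists exactly of the points $(i,j)$ with $i+j\le r-1$. Its order complex is the rank-selected subcomplex of $\Delta(L)$ on ranks $0,\dots,r-1$. Since $L\setminus I$ contains $\hat 0$, the complex $\Delta(L\setminus I)$ is a cone with apex $\hat 0$. A cone over $\Gamma$ is vertex decomposable iff $\Gamma$ is, so we may work with the order complex $\Gamma$ of $Q=\{(i,j):1\le i+j\le r-1\}$.

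The plan is to prove a stronger statement by induction on $|Q|$, for a family that is closed under the operations of vertex decomposition. Define the family of posets $Q$ of the form: an order ideal $A$ of $\mathbb{N}^2$ (a staircase region) intersected with the rank band $\{\,\cdot\,\le r-1\}$, and made pure, i.e. every maximal element has rank $r-1$. Then show $\Delta(Q)$ is vertex decomposable. Deleting a vertex $x$ from $\Delta(Q)$ gives $\Delta(Q\setminus\{x\})$. The link of $x$ is the join $\Delta(Q_{<x})*\Delta(Q_{>x})$, and joins of vertex decomposable complexes are vertex decomposable. Here $Q_{<x}$ is a smaller grid $[0,i]\times[0,j]$, which is a distributive lattice and hence vertex decomposable. $Q_{>x}$ is again a region of the same kind (a translated staircase in a rank band), so it is covered by induction.

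For the shedding vertex I would pick a maximal element $x=(i,j)$ of rank $r-1$ with $i$ largest, i.e. the lower-right corner of the top band. If $x$ is the only maximal element, $\Delta(Q)$ is a cone over $\Delta(Q\setminus\{x\})$ and we are done by induction. Otherwise removing $x$ might destroy purity, namely if $(i,j-1)$ (or $(i-1,j)$) has $x$ as its only cover. I would refine the choice: delete elements in the order $(i,j)$, then $(i,j-1)$ if it becomes maximal, and so on. Equivalently, shed the whole column segment $\{(i,k)\}$ one element at a time from the top. At each step the deletion $Q\setminus\{x\}$ remains pure, because the element below $x$ in the column still lies below $(i-1,j+1)$ in the band. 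The link condition (ii) says that no face of $\link\{x\}$ is a facet of $\Delta-x$. It holds because any maximal chain through $x$ can have $x$ replaced by its upper neighbour to the left, $(i-1,j+1)$, or by the appropriate other element of the same rank in $Q$. So a facet of $\Delta-x$ never lies in the link.

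The main obstacle is making the induction family precise so that both the deletion and the upper part $Q_{>x}$ stay in it. In particular, purity after each deletion must hold, and the exchange element needed for condition (ii) must exist at every stage. If this direct approach becomes messy, a fallback is to use \cite[Corollary 11.7]{BjWa}-type results. Rank-selected subposets of vertex decomposable (e.g. CL-shellable distributive lattice) posets can be handled via Björner--Wachs theory of rank selection for vertex decomposability. Alternatively, one can exhibit an explicit shelling ordering of the maximal chains, which are lattice paths of length $r-1$, in lexicographic order and check a vertex decomposition along it.
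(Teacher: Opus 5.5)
Your reduction to $Q=\{(i,j):1\le i+j\le r-1\}$ (coning off $\hat 0$) and the join description of links are fine, but there is a genuine gap: the inductive core fails in two places.

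\emph{The strengthened statement is false.} Your family of pure staircases cut off at rank $r-1$ contains non-Cohen--Macaulay posets. Take $r-1=3$ and let $A$ be the order ideal of $\mathbb{N}^2$ generated by $(1,2)$ and $(3,0)$. Both maximal elements have rank $3$. Yet in $\Delta(A\setminus\{(0,0)\})$ the link of the vertex $(1,0)$ is the disjoint union of the edges $\{(1,1),(1,2)\}$ and $\{(2,0),(3,0)\}$. That link is disconnected, which is exactly the obstruction of Lemma~\ref{NEW1}(a). So the family must be cut down, for instance to the truncated grids themselves, and closure under your deletions then has to be rechecked.

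\emph{Your shedding vertex violates condition (ii).} Let $x=(i,j)$ be the rank-$(r-1)$ element with largest $i$, and suppose $j\ge 1$ and $r\ge 3$. Then $(i+1,j-1)\notin Q$, so $x$ is the only upper cover of $(i,j-1)$ in $Q$. A maximal chain of $Q\setminus\{x\}$ ending at $(i,j-1)$ is therefore a facet of $\Delta-x$ that lies in $\link_{\Delta}\{x\}$. Your justification does not hold, because $(i,j-1)$ and $(i-1,j+1)$ are incomparable. The exchange $x\mapsto(i-1,j+1)$ only works for chains passing through $(i-1,j)$. Shedding the column ``one element at a time'' does not rescue this, since each single deletion must satisfy (ii) at the moment it is made. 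If $j=0$, the first deletion succeeds, but the next corner $(r-2,1)$ fails in the same way.

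The paper does not decompose by hand. After noting purity, it obtains vertex decomposability from \cite[Theorems 10.11 and 11.6]{BjWa} together with the CL-shellability of distributive lattices. Your fallback points in that direction, but as written it is only a pointer: you would have to state the precise Bj\"orner--Wachs result and verify its hypotheses.

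If you want a self-contained direct proof, one shedding order that does work is the bottom row from right to left: delete $(k,0)$ for $k=\min(n,r-1),\ldots,1$.
\begin{itemize}
\item Each intermediate poset stays pure of the same rank. Every remaining $(k-1,0)$ is still covered by $(k-1,1)$, and no cover relation skips a rank. Hence (ii) holds at every step.
\item The link of $(k,0)$ is the join of the simplex on $(1,0)<\cdots<(k-1,0)$ with the order complex of the remaining elements above $(k,0)$. Those elements have minimum $(k,1)$ and form a cone over a translated truncated grid in $[0,n-k]\times[0,m-1]$.
\item Once the row is gone, what is left has minimum $(0,1)$. It is a cone over the truncated proper part of $[0,n]\times[0,m-1]$.
\end{itemize}
Induction on $n+m$ within the family of truncated grids then closes, with no outside input about distributive lattices.
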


\begin{proof}
We easily see the purity of $L\setminus I$. The vertex decomposability of $L \setminus I$ follows from \cite[Theorems 10.11 and 11.6]{BjWa} together with the fact that every distributive lattice is CL-shellable.
\hspace{7.3cm}
\end{proof}

We now come to a complete solutions of Question \ref{CMshellable} as well as Problem \ref{L-I}.

\begin{Theorem} \label{divposet}
Let $L=D_{2^n\cdot 3^m}$ and $I$ a dual order ideal of $L$. Then the following conditions are equivalent:
\begin{itemize}
\item [(i)] $L\setminus I$ is a sublattice of $L$ of the form $D_{2^{n'}\cdot 3^{m'}} \setminus I'$, where $n' \leq n, m' \leq m$ and where $I'$ is a rank-fixed dual order ideal of $D_{2^{n'}\cdot 3^{m'}}$.

\item [(ii)] $L\setminus I$ is vertex decomposable.

\item [(iii)] $L\setminus I$ is shellable.

\item [(iv)] $L\setminus I$ is Cohen--Macaulay.
\end{itemize}
\end{Theorem}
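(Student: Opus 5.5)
The plan is to close the cycle of implications (i) $\Rightarrow$ (ii) $\Rightarrow$ (iii) $\Rightarrow$ (iv) $\Rightarrow$ (i). Everything needed has already been assembled; the proof mostly consists of arranging the earlier lemmas.

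First, (ii) $\Rightarrow$ (iii) $\Rightarrow$ (iv) hold for any poset. Vertex decomposable complexes are shellable, and shellable complexes are Cohen--Macaulay, as recalled in Section 1. Second, (iv) $\Rightarrow$ (i) is exactly one direction of Corollary \ref{NEW2}. We argue it as follows. Take the minimal elements $2^{\alpha_1}3^{\beta_1},\ldots$ of $I$, ordered as in Lemma \ref{NEW1}.
\begin{itemize}
\item If $1\le\alpha_1\le n-1$ and $\beta_1\ge1$, then Lemma \ref{NEW1}(a) contradicts Cohen--Macaulayness.
\item If $\alpha_1=n$ and $\beta_1\ge 1$, then Lemma \ref{NEW1}(b) shows that $I$ is rank-fixed, and we take $n'=n$, $m'=m$.
\item If $\alpha_1=0$, then $L\setminus I=D_{2^n\cdot 3^{\beta_1-1}}$, which is of the required form with $I'=\emptyset$ (or with $I'$ the top rank, as a degenerate rank-fixed ideal).
\item If $\beta_1=0$, i.e.\ the first minimal element is $2^{n'}$, then $L\setminus I=L'\setminus I'$ with $L'=D_{2^{n'-1}\cdot 3^m}$ and $I'=I\cap L'$ a dual order ideal of $L'$. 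Since $L'\setminus I'$ is still Cohen--Macaulay, we apply induction on $n+m$.
\end{itemize}
An interval of $L\setminus I$ is an interval of $L'\setminus I'$, so Lemma \ref{NEW1} applies verbatim to the smaller lattice.

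Third, for (i) $\Rightarrow$ (ii) we identify $L\setminus I$ with $D_{2^{n'}\cdot 3^{m'}}\setminus I'$ with $I'$ rank-fixed. Lemma \ref{divvertdecom}, applied to the lattice $D_{2^{n'}\cdot 3^{m'}}$, gives vertex decomposability directly. The only care needed is the degenerate case in which $I'$ is empty, i.e.\ the full divisor lattice. There we use that a distributive lattice is CL-shellable and hence vertex decomposable by the same results of Bj\"orner--Wachs cited in Lemma \ref{divvertdecom}; alternatively, we regard the empty ideal as the rank-fixed ideal above the top rank.

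The main obstacle is the bookkeeping in (iv) $\Rightarrow$ (i), namely making the induction reducing $2^{n'}$-type minimal elements legitimate. One must check that after the reduction the remaining minimal elements still satisfy the hypotheses of Lemma \ref{NEW1} in the smaller lattice, and that the rank-fixed conclusion for $I'$ gives the sublattice description in (i). I would handle this by an explicit induction on $n+m$, using the remark following Lemma \ref{NEW1}.
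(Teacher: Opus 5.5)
Your architecture is exactly the paper's: (i)$\Rightarrow$(ii) by Lemma~\ref{divvertdecom}, (ii)$\Rightarrow$(iii)$\Rightarrow$(iv) in general, and (iv)$\Rightarrow$(i) by the case analysis of Lemma~\ref{NEW1} together with the reduction remark after it (i.e.\ Corollary~\ref{NEW2}).

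\textbf{The gap.} Your second bullet, taken over from Lemma~\ref{NEW1}(b), is false. When $\alpha_1=n$ and $\beta_1\ge 1$, Cohen--Macaulayness does \emph{not} force $I$ to be rank-fixed with $n'=n$, $m'=m$. Take $n=m=2$ and let $I$ be generated by $12=2^2\cdot 3$ and $9=3^2$, so $\alpha_1=2=n$ and $\beta_1=1$. Then $L\setminus I=\{1,2,3,4,6\}$. Its order complex is the cone with apex ${\hat 0}=1$ over the path with edges $\{2,4\},\{2,6\},\{3,6\}$, so it is Cohen--Macaulay. Yet the minimal elements of $I$ have ranks $3$ and $2$. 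Statement (i) does hold here, but only as $L\setminus I=D_{12}\setminus\{12\}$, i.e.\ with $m'=1<m$. The error in the proof of Lemma~\ref{NEW1}(b) is the claim that purity above $2^{n-1}3^{\beta_1-1}$ gives $\alpha_2=n-1$. It only gives $\beta_2=\beta_1+1$, and $\alpha_2=0$ is possible. So both your argument and the one you quote mishandle the case where the \emph{last} minimal element is a pure power of $3$ with $k\ge 2$; your third bullet only covers $k=1$.

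\textbf{The repair.} Add the reduction symmetric to your fourth bullet. If $\alpha_k=0$, then $L\setminus I=D_{2^n\cdot 3^{\beta_k-1}}\setminus\bigl(I\cap D_{2^n\cdot 3^{\beta_k-1}}\bigr)$, and you induct on $n+m$ as before. Once $\beta_1\ge 1$ and $\alpha_k\ge 1$, the conclusion is correct, and it is cleanest to prove it for all $i$ at once:
\begin{itemize}
\item Put $\alpha_0=n+1$ and $\beta_{k+1}=m+1$. Each $x_i=2^{\alpha_i-1}3^{\beta_i-1}$ lies in $L\setminus I$.
\item The elements of $L\setminus I$ strictly above $x_i$ form two mutually incomparable chains, of sizes $\alpha_{i-1}-\alpha_i$ and $\beta_{i+1}-\beta_i$.
\item Their order complex is the link of a maximal chain of $[{\hat 0},x_i]$, so Cohen--Macaulayness forces both sizes to equal $1$.
\item Hence $\alpha_i=n+1-i$, $\beta_i=\beta_1+i-1$ and $\beta_k=m$.
\end{itemize}
So the minimal elements of $I$ are exactly the $k$ elements of rank $n+\beta_1$ in $L$, i.e.\ $I$ is rank-fixed. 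This argument also subsumes Lemma~\ref{NEW1}(a).

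A smaller slip: in your third bullet only $I'=\emptyset$ works. Taking $I'$ to be the top rank would delete elements of $L\setminus I$.
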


\begin{proof}
First, (i) $\Rightarrow$ (ii) follows from Lemma \ref{divvertdecom}.   Second, (ii) $\Rightarrow$ (iii) $\Rightarrow$ (iv) are true in general.  Finally, (iv) $\Rightarrow$ (i) follows from Corollary \ref{NEW2}.
 \, \, \, \, \, \, \, \, \, \,
 \, \, \, \, \,
\end {proof}

\section {Question \ref{linear_resolution}}
Let $P$ be a poset of distributive type and $K[P] = K[\{x_\alpha\}_{\alpha \in P}]$ the polynomial ring in $|P|$ variablrs over $K$.  Since the quotient ring $\Rc_K[P] = K[P]/J_{P}$ is an ASL on $P$ over $K$, it follows from the second paragraph of Section $2$ that
\[
\Gc_{P}:=\{f_{\alpha,\beta} :\alpha, \beta \in P\}
\]
is a Gr\"obner basis of $J_{P}$ with respect to the reverse lexicographc order $<_{\rm rev}$ on $K[P]$ induced by the ordering of the variables for which $x_{\alpha} < x_{\beta}$ if $\alpha < \beta$ in $P$.

Recall that the {\em comparablility graph} of a poset $P$ is a graph on $P$ whose edges are those $\{x,y\}$ with either $x < y$ or $y < x$.

\begin{Theorem}
\label{chordal}
The ideal $J_{P}$ has a linear resolution if and only if the comparability graph of $P$ is chordal.
\end{Theorem}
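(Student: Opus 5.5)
The plan is to pass to the initial ideal with respect to $<_{\rm rev}$ and to apply Fröberg's theorem to it.

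\textbf{Step 1: compute $\ini_{<_{\rm rev}}(J_{P})$.} By Lemma \ref{L-I:ASL} and the discussion at the beginning of this section, $\Gc_{P}$ is a Gröbner basis of $J_{P}$ with respect to $<_{\rm rev}$. For incomparable $\alpha,\beta$ there are two cases.
\begin{itemize}
\item If $\alpha,\beta$ have no common upper bound, then $f_{\alpha,\beta}=x_\alpha x_\beta$.
\item Otherwise, every non-leading monomial of $f_{\alpha,\beta}$ has the form $x_{\alpha\wedge\beta}x_\gamma$. Here $\alpha\wedge\beta<\alpha$ and $\alpha\wedge\beta<\beta$, so $x_{\alpha\wedge\beta}$ is smaller than both $x_\alpha$ and $x_\beta$. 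In the reverse lexicographic order this forces $x_\alpha x_\beta > x_{\alpha\wedge\beta}x_\gamma$.
\end{itemize}
Hence in both cases $\ini(f_{\alpha,\beta})=x_\alpha x_\beta$. It follows that
\[
\ini_{<_{\rm rev}}(J_{P})=(x_\alpha x_\beta : \alpha,\beta \text{ incomparable in } P),
\]
which is the edge ideal $I(G)$ of the incomparability graph $G$ of $P$. The complement of $G$ is exactly the comparability graph of $P$.

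\textbf{Step 2: apply Fröberg's theorem.} Fröberg's theorem says that $I(G)$ has a linear resolution if and only if the complement of $G$ is chordal. So $\ini(J_{P})$ has a linear resolution if and only if the comparability graph of $P$ is chordal.

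\textbf{Step 3: transfer linearity between $J_{P}$ and $\ini(J_{P})$.} Both $J_{P}$ and $\ini(J_{P})$ are generated in degree $2$. If $J_P=0$ (every pair of elements comparable), then $P$ is a chain, its comparability graph is complete and hence chordal, and the statement holds trivially. Otherwise, a nonzero ideal generated in degree $2$ has a linear resolution exactly when its regularity equals $2$.
\begin{itemize}
\item One direction is standard: by upper semicontinuity, $\beta_{i,j}(J_{P})\leq \beta_{i,j}(\ini(J_{P}))$. So if $\ini(J_{P})$ has a linear resolution, so does $J_{P}$.
\item For the converse, I would use that $\ini(J_{P})$ is squarefree. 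The theorem of Conca and Varbaro, already invoked in Corollary \ref{L-I:CM:ASL} via \cite{CV}, then gives $\reg(J_{P})=\reg(\ini(J_{P}))$. Thus $J_{P}$ has a linear resolution if and only if $\ini(J_{P})$ does.
\end{itemize}
Combining this with Step 2 proves the theorem.

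I expect the main obstacle to be the converse direction in Step 3, since Betti numbers of an ideal and of its initial ideal can differ in general. The squarefree-degeneration result of Conca and Varbaro is precisely what closes this gap, so the proof should cite it explicitly. The remaining care needed is only in Step 1: the leading-term computation must hold in the case with several minimal upper bounds, which it does because every trailing term contains the variable $x_{\alpha\wedge\beta}$.
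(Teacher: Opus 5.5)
Your proposal is correct and follows the same route as the paper. The paper likewise identifies the revlex initial ideal as the squarefree edge ideal of the complement of the comparability graph, applies Fröberg's criterion, and transfers linearity of the resolution back to $J_{P}$ via \cite[Corollary 2.7]{CV}. Your explicit leading-term check (every trailing term contains $x_{\alpha\wedge\beta}$) and your treatment of the trivial case $J_{P}=0$ fill in details the paper leaves implicit.
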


\begin{proof}
Since $J_{P}$ has a squarefree initial ideal, it follows from \cite[Corollary 2.7]{CV} that $J_{P}$ has a linear resolution if and only if its initial ideal has a linear resolution.  Since the initial ideal is the edge ideal of the complement of the comparability graph of $P$, it follows from Fr\"oberg's criterion \cite[Theorem 9.2.3]{HHgtm260} that the initial ideal has linear resolution if and only if the comparability graph of $P$ is chordal.
\, \, \, \, \, \, \, \, \,
\end{proof}

Corollary \ref{EHH} is shown in \cite[Theorem 4.2]{EHH} by means of Hilbert functions.

\begin{Corollary}
\label{EHH}
Let $L$ be a simple distributive lattice.  Then $J_{L}$ has a linear resolution if and only if $L = D_{2 \cdot 3^n}$ with $n \geq 1$.
\end{Corollary}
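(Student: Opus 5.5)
By Theorem \ref{chordal}, $J_L$ has a linear resolution if and only if the comparability graph $G(L)$ of $L$ is chordal. The plan is therefore purely combinatorial: show that for a simple distributive lattice $L$, the graph $G(L)$ is chordal exactly when $L = D_{2\cdot 3^n}$ with $n\geq 1$. Throughout write $L=\Jc(P)$ with $P$ a finite poset.

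\emph{Sufficiency.} $D_{2\cdot 3^n}$ is the product of a $2$-chain and an $(n+1)$-chain. Its elements are $a_i=3^i$ and $b_i=2\cdot 3^i$, $0\le i\le n$, with $a_i<a_j$ and $b_i<b_j$ for $i<j$, $a_i<b_j$ for $i\le j$, and $b_i$ incomparable to $a_j$ for $j>i$. To prove chordality I will exhibit a perfect elimination ordering. Delete $a_0={\hat 0}$ and $b_n={\hat 1}$ first; each is adjacent to everything, so its neighbourhood is a clique. Then repeatedly delete $b_0$, whose remaining neighbours $b_1,\dots,b_{n-1},a_1,\dots$ need checking; more cleanly, note that $a_i$ and $b_j$ are adjacent iff $i\le j$. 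Hence the neighbourhood of $b_0$ inside $\{a_1,\dots,a_n,b_0,\dots,b_{n-1}\}$ is $\{b_1,\dots,b_{n-1}\}$, which is a clique. Induct on $n$. Alternatively, observe directly that the graph is a split-like threshold graph.

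\emph{Necessity.} Suppose $L$ is simple and $G(L)$ is chordal. The key observation is that a product of two chains $C_3\times C_3$ (that is, $D_{4\cdot 9}$-type), or a boolean interval $B_3$, contains an induced $4$-cycle in the comparability graph. In $B_2\times$ anything of rank $\ge$ the appropriate size one finds $x<y$, $x<y'$, $z>y$, $z>y'$ with $y,y'$ incomparable, plus a second incomparable pair, giving an induced $C_4$ on $\{y,y',w,w'\}$ where $y,y'$ are each below both $w$ and $w'$, and $y\|y'$, $w\|w'$. For this one needs two "stacked" incomparable pairs. In $B_3$, take the atoms $\{1\},\{2\}$ and the coatoms $\{1,3\},\{2,3\}$: then $\{1\}<\{1,3\}$ and $\{2\}<\{2,3\}$ but $\{1\}\not<\{2,3\}$, so this is not a $C_4$. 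Instead take $\{1\},\{2\}$ and $\{1,2,3\}$... I would search for the right configuration as follows. In $C_3\times C_3$ the elements $(1,0),(0,1),(2,1),(1,2)$ form an induced $C_4$, since each of the first two lies below each of the last two, and the pairs are incomparable. In $B_3$, $\{1\},\{2\},\{1,2,3\}$ are not enough, but $\{1\},\{2\},\{1,2\}$ together with $\{3\}$... The cleanest route is to show that chordality forces $P$ to have width $\le 2$ with structure restricted enough, then use simplicity.

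So the steps are:
\begin{enumerate}
\item Show that $L$ contains no interval isomorphic to $C_3\times C_3$; such an interval contains the induced $C_4$ above, and an interval's comparability graph is an induced subgraph of $G(L)$.
\item Show that $P$ has no antichain of size $3$. Then $\{p\},\{q\}$ would be join-irreducibles, and I expect an induced $C_4$ among rank-$1$ and rank-$2$ elements of a $B_3$ interval together with elements further up. This is the delicate step; I would first try $\{1\},\{2\},\{1,3\},\{2,3\}$ inside a larger configuration, or instead use $4$-element sets coming from a chain extension in $P$.
\item With $P$ of width $\le 2$ and no $C_3\times C_3$ interval, deduce that $P$ is a disjoint union of a $1$-chain and an $n$-chain. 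Simplicity rules out apexes, which correspond to $P$ being an ordinal sum, so the two components are incomparable. If both chains had length $\ge 2$, $L$ would contain $C_3\times C_3$.
\end{enumerate}
This yields $L=D_{2\cdot 3^n}$.

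The main obstacle is step 2, ruling out $B_3$ and width-$3$ configurations. If no induced $C_4$ is found there, I would instead compare Betti numbers, using Lemma \ref{restrict} on a $B_3$ interval together with the known non-linear resolution of $J_{B_3}$.
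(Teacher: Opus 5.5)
Reducing the problem to chordality of the comparability graph via Theorem~\ref{chordal} matches the paper. The necessity direction, however, has a genuine gap in step~3.

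You assert that if $P$ has width $\le 2$, simplicity of $L$ makes the two chains of $P$ mutually incomparable. Simplicity only says that $P$ is not an ordinal sum, which is much weaker. Take the poset $N$ with $x_0<x_1$, $y_0<y_1$, $x_0<y_1$. It is not an ordinal sum, so $\Jc(N)$ is simple. It is not of the form $D_{2\cdot 3^n}$ (by Birkhoff, since $N$ is connected). Yet $\Jc(N)$ has only $8$ elements, so it contains no copy of $C_3\times C_3$. Your obstruction cannot detect it, and step~3 as planned would ``prove'' something false.

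What is missing is the correct induced $4$-cycle for the planar case:
\begin{itemize}
\item Write $P=C\cup C'$ with disjoint chains $x_0<\cdots<x_s$ and $y_0<\cdots<y_t$. Simplicity gives exactly $x_0\,\|\,y_0$ and $x_s\,\|\,y_t$, since otherwise $P$ has a minimum or a maximum, which yields an apex.
\item If $s,t>0$, the poset ideals $\{x_0\}$, $\{y_0\}$, $P\setminus\{x_s\}$, $P\setminus\{y_t\}$ induce a $4$-cycle in the comparability graph. Your cycle $(1,0),(0,1),(2,1),(1,2)$ in $C_3\times C_3$ is only the special case of two disjoint $2$-element chains.
\item If $s=0$, the same two incomparabilities force $x_0$ to be incomparable to every $y_j$, so $L=D_{2\cdot 3^{t+1}}$.
\end{itemize}

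Step~2 is also left open, and the search you describe cannot succeed. In the comparability graph of $B_3$, the elements $\hat 0,\hat 1$ are universal, so they lie on no induced cycle of length $\ge 4$. The six remaining elements induce a $6$-cycle, so there is no induced $4$-cycle at all. The right obstruction is the induced $6$-cycle $\{1\},\{1,2\},\{2\},\{2,3\},\{3\},\{1,3\}$, which is what the paper uses. Combine it with the fact that a $3$-element antichain of $P$ gives an interval $B_3$ of $L$. Your fallback through Lemma~\ref{restrict} is admissible, but the non-linearity of $J_{B_3}$ it requires is most easily seen from this very cycle.

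Finally, in the sufficiency part, $\hat 0$ cannot be eliminated first: its neighbourhood is everything, which is not a clique. Either discard universal vertices using the remark above, or use the perfect elimination ordering $a_n,a_{n-1},\dots,a_0,b_0,\dots,b_n$.
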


\begin{proof}
The comparability graph of $L = D_{2 \cdot 3^n}$ with $n \geq 1$ is the finite graph ${\rm Com}(L)$ on $\{1,\ldots,n, 1', \ldots, n'\}$ whose edges are $\{i,j\}, \{i',j'\}$ with $i < j$ and $\{i,j'\}$ with $j \leq i$.  Since the induced subgraph of ${\rm Com}(L)$ on $\{1,\ldots,n\}$ and that on $\{1', \ldots, n'\}$ are complete graphs, we easily see that ${\rm Com}(L)$ is a chordal graph.

If $L$ is nonplanar, then an interval of $L$ is the boolean lattice $B_3$.  Since the comparability graph of $B_3$ is not chordal, it follows that ${\rm Com}(L)$ cannot be chordal.  Suppose that $L = \Jc(P)$ is planar.  Let $C:x_0 < x_1 < \cdots < x_s$ and $C': y_0 < y_1 < \cdots < y_t$, where $s > 0$ and $t > 0$, be chains of $P$ with $P=C\cup C'$ and $C \cap C' = \emptyset$.  Since $L$ is simple, $x_0$ and $y_0$ (resp. $x_s$ and $y_t$) are incomparable.  Let $a = P \setminus \{x_s\}, b = P \setminus \{y_t\}, a'= \{x_0\}, b' = \{y_0\}$ be elements of $\Jc(P)$.  Then the induced subgraph of ${\rm Com}(L)$ on $\{a,b,a',b'\}$ is the cycle of length $4$ without any chord.  Thus ${\rm Com}(L)$ cannot be chordal, as desired.
\, \, \, \, \, \, \, \, \, \, \, \, \, \, \, \, \, \,
\end{proof}

\begin{Example}
{\em
Let $L$ be a distributive lattice and $I$ a dual order ideal. Assume that $J_{L\setminus I}$ has a linear resolution and suppose that $I'$ is a dual order ideal of $L$ with $I \subset I'$.  It then follows from Theorem \ref{chordal} that $J_{L\setminus I'}$ has a  linear resolution.  In particular, if $I$ is a dual order ideal of the divisor lattice $D_{2 \cdot 3^n}$ with $n \geq 1$, then $J_{D_{2 \cdot 3^n} \setminus I}$ has a linear resolution.
}
\end{Example}

\begin{Example}
{\em
Let $P$ be the poset of distributive type of Figure $2$.  Since the comparability graph of $P$ is chordal, the ideal $J_P$ has a linear resolution.
}
\end{Example}

 \begin{figure}[h]
\begin{tikzpicture}[scale=0.7]
\node[draw,shape=circle] (1) at (0,0) {};
\node[draw,shape=circle] (2) at (1,1) {};
\node[draw,shape=circle] (3) at (2,2) {};
\node[draw,shape=circle] (4) at (3,3) {};
\node[draw,shape=circle] (5) at (1,-1) {};
\node[draw,shape=circle] (6) at (2,0) {};
\node[draw,shape=circle] (7) at (3,1) {};
\node[draw,shape=circle] (8) at (4,2) {};
\node[draw,shape=circle] (9) at (2,-2) {};
\node[draw,shape=circle] (10) at (3,-1) {};
\node[draw,shape=circle] (11) at (4,0) {};
\node[draw,shape=circle] (12) at (3,-3) {};
\node[draw,shape=circle] (13) at (4,-2) {};
\node[draw,shape=circle] (14) at (5,-1) {};
\node[draw,shape=circle] (15) at (4,-4) {};
\node[draw,shape=circle] (16) at (5,-3) {};
\node[draw,shape=circle] (17) at (5,-5) {};
\node[draw,shape=circle] (18) at (6,-4) {};
\draw(1)--(2)--(3)--(4)--(8)--(7)--(6)--(5);
\draw(2)--(6);
\draw(3)--(7);
\draw(1)--(5)--(9)--(12)--(15)--(17)--(18)--(16)--(15);
\draw(9)--(10)--(11)--(14)--(13)--(12);
\draw(10)--(13);
\end{tikzpicture}
    \caption{A poset with linear resolution}
\end{figure}
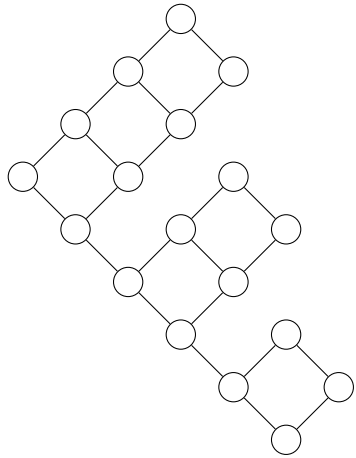

It would, of course, be of interest to find a reasonable class of posets $P$ of distributive type for which $J_P$ has linear resolution.

\section {Question \ref{rank-selected}}
In \cite[p.~105]{Hibi}, it is shown that $\Rc_K[L]$ is Gorenstein if and only if $P$ is pure, where $L=\Jc(P)$.  However, except for $I = \emptyset$, finding a complete solution to Question \ref{rank-selected} seems difficult.  We focus on Question \ref{rank-selected} for the boolean lattice $L = B_n$.

\begin{Lemma}
    \label{Boston}
    Let $I$ be a rank-fixed dual order ideal of $B_n$ and $d + 1$ the rank of the minimal elements of $I$.  If $1 \leq d \leq n-2$, then $\Rc_K[B_n\setminus I]$ cannot be Gorenstein.
\end{Lemma}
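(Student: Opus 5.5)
The plan is to use the Hilbert series and the symmetry of the $h$-vector of a Gorenstein ring. Put $P:=B_n\setminus I$. Since $I$ is rank-fixed with minimal elements of rank $d+1$, $P$ consists of all subsets of $[n]$ of cardinality at most $d$. By \cite[Theorem 6.4]{Bac} and Corollary \ref{L-I:CM:ASL}, $\Rc_K[P]$ is Cohen--Macaulay. By Lemma \ref{L-I:ASL} it is an ASL on $P$, so by (ASL-1) its standard monomials form a $K$-basis. Standard monomials correspond bijectively to monomials supported on chains of $P$, so $\Rc_K[P]$ has the same Hilbert series as the Stanley--Reisner ring $K[\Delta(P)]$. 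Hence the $h$-vector of $\Rc_K[P]$ equals the $h$-vector of $\Delta(P)$. By Stanley's theorem, a Cohen--Macaulay standard graded Gorenstein algebra has a symmetric $h$-vector. It therefore suffices to show that $h(\Delta(P))$ is not symmetric, and I will compare $h_0=1$ with the last entry $h_{\text{top}}$.

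Next I compute $h(\Delta(P))$. The element $\emptyset={\hat 0}$ lies on every maximal chain, so $\Delta(P)$ is a cone over $\Delta(P\setminus\{\emptyset\})$, and coning does not change the $h$-vector. Now $P\setminus\{\emptyset\}$ is the rank-selected subposet of $B_n$ on the ranks $1,\ldots,d$. By Stanley's theory of rank-selection for the boolean lattice (the CL-/EL-labeling of $B_n$), its $h$-vector satisfies
\[
h_i=\sum_{S\subset[d],\,|S|=i}\beta_{B_n}(S),
\]
where $\beta_{B_n}(S)$ is the number of permutations of $[n]$ with descent set exactly $S$.

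The top entry is $h_d=\beta_{B_n}(\{1,\ldots,d\})$. This counts permutations $w$ with $w_1>\cdots>w_{d+1}<w_{d+2}<\cdots<w_n$. For such $w$ one has $w_{d+1}=1$, and the set $\{w_1,\ldots,w_d\}$ is an arbitrary $d$-subset of $\{2,\ldots,n\}$, so $h_d=\binom{n-1}{d}$. When $1\le d\le n-2$ (so $n\ge 3$), we get $\binom{n-1}{d}\ge n-1\ge 2>1=h_0$. Thus the $h$-vector is not symmetric, and $\Rc_K[P]$ is not Gorenstein.

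The main obstacle is justifying the rank-selection formula for $h$. If I prefer not to cite it, I will compute $h_d$ directly. For a pure $(d-1)$-dimensional complex, $h_d=(-1)^{d-1}\widetilde{\chi}$, so it suffices to compute the Möbius function. By the standard computation, $|\mu|$ of the rank-$\le d$ truncation of $B_n$ with a top element adjoined equals $\binom{n-1}{d}$. This is the same as the number of facets of the shelling with full restriction. Either route gives the needed value $\binom{n-1}{d}$.
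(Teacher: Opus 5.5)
Your proposal is correct. Its skeleton matches the paper's: both show that the top entry of the $h$-vector is $h_d=\binom{n-1}{d}\geq 2$, while $h_0=1$, which contradicts the symmetry forced by the Gorenstein property.

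The difference lies in how $h_d$ is computed.
\begin{itemize}
\item The paper observes that the order complex of $(B_n\setminus I)\setminus\{\hat 0\}$ is the barycentric subdivision of the $(d-1)$-skeleton $\Delta$ of the $(n-1)$-simplex. It then uses $h_d=(-1)^{d-1}\widetilde{\chi}$, the invariance of $\widetilde{\chi}$ under subdivision, and an alternating binomial sum for $\widetilde{\chi}(\Delta)$.
\item You use the flag $h$-vector formula $h_i=\sum_{S\subseteq[d],\,|S|=i}\beta_{B_n}(S)$ and count permutations with descent set $\{1,\ldots,d\}$ bijectively. This formula is a purely enumerative inclusion--exclusion identity between $\alpha$ and $\beta$, valid for rank-selection in any graded poset, so no CL- or EL-labeling is needed.
\end{itemize}

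Your route avoids topology and yields the whole $h$-vector, with each $h_i$ a descent statistic, at the cost of invoking rank-selection machinery. The paper's route is more elementary and produces only $h_d$, which is all that is needed. Your fallback via the M\"obius function is essentially the paper's argument, since $\mu$ of the truncation with $\hat 1$ adjoined equals $\widetilde{\chi}$ of the order complex of the open interval.

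You also make explicit two steps the paper leaves implicit: that the $h$-vector of the ASL coincides with that of the order complex (via standard monomials), and that Gorenstein forces $h_0=h_d$ (Stanley's symmetry).
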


\begin{proof}
    Let $\Delta$ denote the simplicial complex on $[n]=\{1,\ldots,n\}$ which consists of $F \subset [n]$ with $|F| \leq d$.  It follows that the order complex $\Gamma$ of $(B_n \setminus I)\setminus \{{\hat 0}\}$ is the barycentric subdivision of $\Delta$.  Let $h(\Gamma)=(h_0,h_1,\ldots,h_d)$ denote the $h$-vector of $\Gamma$ and $h(\Delta)=(h'_0,h'_1,\ldots,h'_d)$ that of $\Delta$. One has
\[
(-1)^{d-1} h_d = \widetilde{\chi}(\Gamma)=\widetilde{\chi}(\Delta)= (-1)^{d-1}  h'_d
\]
and, by using the formula ${p \choose q}={p-1 \choose q-1}+{p-1 \choose q}$, it follows that
\[
(-1)^{d-1}h'_d = -{n \choose 0}+ {n \choose 1} -{n \choose 2} + \cdots + (-1)^{d-1}{n \choose d} = (-1)^{d-1}{n-1 \choose d}.
\]
Since $0 < d < n-1$, it follows that $h'_d \geq 2$.  Since $(h_0,h_1,\ldots,h_d)$ is the $h$-vector of $\Rc_K[B_n \setminus I]$, it follows that $\Rc_K[B_n \setminus I]$ cannot be Gorenstein.
\, \, \, \, \, \, \, \, \, \, \,
\end{proof}

\begin{Theorem} \label{boolgor}
Let $I$ be a rank-fixed dual order ideal of $B_n$.  Then $\Rc_K[B_n\setminus I]$ is Gorenstein if and only if $I=\emptyset$ or $I=\{{\hat 1}_{B_n}\}$ or $B_n \setminus I=\{{\hat 0}_{B_n}\}$ or $B_n \setminus I=\emptyset$.
\end{Theorem}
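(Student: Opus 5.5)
Since $I$ is rank-fixed, its minimal elements are exactly the elements of some fixed rank $r$ of $B_n$ (with $0 \le r \le n$), or $I=\emptyset$. We split the argument according to the value of $r$.

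\emph{The four cases in the statement are Gorenstein.}
\begin{itemize}
\item If $I=\emptyset$, then $B_n\setminus I=B_n=\Jc(P)$ with $P$ an $n$-element antichain. Since $P$ is pure, $\Rc_K[B_n]$ is Gorenstein by \cite[p.~105]{Hibi}.
\item If $I=\{\hat 1\}$ (rank $r=n$), then $B_n\setminus I$ is $\hat 0$ adjoined to the proper part of $B_n$. The order complex of the positive part is the barycentric subdivision of the boundary of an $(n-1)$-simplex, hence a sphere, and is Gorenstein. Adjoining $\hat 0$ makes the full order complex a cone over it. By Lemma \ref{L-I:ASL}, $\Rc_K[B_n\setminus I]$ is an ASL, and I would deduce Gorensteinness by a flat degeneration. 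By \cite[Corollary 4.2]{Eis} the ASL degenerates to the Stanley--Reisner ring $K[\Delta(B_n\setminus I)]$, a cone over a sphere, which is Gorenstein. The Hilbert function is preserved, so the $h$-vector is symmetric. Together with Cohen--Macaulayness and the fact that $\Rc_K[B_n\setminus I]$ is a domain (Stanley's argument in \cite{Sta91} for simplicial posets whose order complex is a homology manifold/sphere), Stanley's symmetry criterion \cite{StaGREEN} for Cohen--Macaulay domains gives the Gorenstein property. I expect this to be the main obstacle: Gorensteinness does not in general pass from a degeneration back to the original ring, so either the domain property must be verified, or one should cite Stanley's result from \cite{Sta91} that the face ring of a simplicial poset whose complex is a Gorenstein* sphere is Gorenstein. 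Here $B_n\setminus\{\hat 1\}$ is precisely the simplicial poset of the boundary of a simplex, so \cite{Sta91} applies directly. (In fact $\Rc_K[B_n\setminus\{\hat 1\}]$ is a hypersurface $x_{\{1\}}\cdots$-type ring, which I would note as an alternative check.)
\item If $B_n\setminus I=\{\hat 0\}$ ($r=1$) or $B_n\setminus I=\emptyset$ ($r=0$), the ring is a polynomial ring $K[x]$ or $K$, which is trivially Gorenstein.
\end{itemize}

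\emph{All other cases fail to be Gorenstein.} The remaining values are $2\le r\le n-1$, i.e.\ $r=d+1$ with $1\le d\le n-2$. This is exactly the hypothesis of Lemma \ref{Boston}, which gives non-Gorensteinness. This exhausts all possible rank-fixed dual order ideals.
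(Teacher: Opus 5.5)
Your treatment of $I=\emptyset$, of the two trivial cases, and of the ``only if'' direction via Lemma~\ref{Boston} matches the paper. The gap is in the case $I=\{\hat 1\}$.

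The argument you actually commit to rests on the claim that $\Rc_K[B_n\setminus\{\hat 1\}]$ is a domain, and this is false for $n\ge 2$. In $B_n\setminus\{\hat 1\}$ the elements $\{1\}$ and $\{2,\ldots,n\}$ have no common upper bound. Hence $x_{\{1\}}x_{\{2,\ldots,n\}}\in J_{B_n\setminus\{\hat 1\}}$, while both variables are nonzero standard monomials. For $n=2$ the ring is just $K[x_{\emptyset},x_{\{1\}},x_{\{2\}}]/(x_{\{1\}}x_{\{2\}})$. So Stanley's symmetry criterion for Cohen--Macaulay domains cannot be applied.

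Your fallbacks do not close the gap either. The ring is not a hypersurface for $n\ge 3$, since its codimension is $2^n-1-n$. The appeal to \cite{Sta91} is not a precise result about $\Rc_K[P]$: Stanley's face ring sets $x_{\hat 0}=1$ and is graded by rank, so a transfer argument would still be needed.

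Your first idea already works, and your stated worry points the wrong way. Gorensteinness does pass from a Gr\"obner degeneration back to the original ring; it is the converse that fails. Here $\ini_{<_{\rm rev}}(J_P)=I_{\Delta(P)}$, and $K[\Delta(B_n\setminus\{\hat 1\})]$ is Gorenstein, being a cone over the barycentric subdivision of the boundary of a simplex. So $S/J_P$ is Cohen--Macaulay with the same projective dimension $p$. Upper semicontinuity of Betti numbers then gives $1\le\beta_p(S/J_P)\le\beta_p(S/I_{\Delta(P)})=1$.

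The paper's argument is even shorter. Modulo $x_{\hat 1}$, the join--meet binomials with $\alpha\vee\beta=\hat 1$ become the monomials $x_\alpha x_\beta$, so $\Rc_K[B_n]/(x_{\hat 1})\cong\Rc_K[B_n\setminus\{\hat 1\}]$. Since $\Rc_K[B_n]$ is a Gorenstein domain, $x_{\hat 1}$ is a regular element, and the quotient is Gorenstein.
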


\begin{proof}
The ``Only If'' part follows from Lemma \ref{Boston}.  It follows from \cite[p.~105]{Hibi} that $\Rc_K[B_n]$ is Gorenstein.  Furthermore, since $\hat{1}_{B_n}$ is a nonzero divisor of $\Rc_K[B_n]$, $\Rc_K[B_n\setminus \{\hat{1}_{B_n}\}]$ is Gorenstein.  On the other hand, $\Rc_K[\{{\hat 0}_{B_n}\}]$ is the polynomial ring in one variable and $\Rc_K[\emptyset]=K$.
\, \, \, \, \, \, \, \, \, \, \, \, \, \, \, \, \, \, \, \, \, \, \, \, \, \, \, \, \, \, \, \,
\end{proof}

On the other hand, Lemma \ref{Boston} brings a result on regularity of $\Rc_K[B_n\setminus I]$.

\begin{Corollary}
    \label{regularity}
Let $I$ be a rank-fixed dual order ideal of the boolean lattice $B_n$ and $d + 1$ the rank of the minimal elements of $I$.  Suppose that $1 \leq d \leq n-2$.  Then
\[
\reg(\Rc_K[B_n\setminus I]) = \rank(B_n \setminus I).
\]
\end{Corollary}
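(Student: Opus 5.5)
Write $R=\Rc_K[B_n\setminus I]$. The plan is to read off the regularity from the $h$-vector, using that $R$ is Cohen--Macaulay. Since $I$ is rank-fixed, $B_n\setminus I$ is Cohen--Macaulay by \cite[Theorem 6.4]{Bac}, so $R$ is Cohen--Macaulay by Corollary \ref{L-I:CM:ASL}.

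The poset $B_n\setminus I$ consists of all subsets of $[n]$ of size at most $d$. Its rank is therefore $d$, and the Krull dimension of $R$ is $d+1$, the maximal cardinality of a chain. For a Cohen--Macaulay standard graded $K$-algebra, we have $\reg(R)=s$, where $s$ is the largest index with $h_s\neq 0$ in the $h$-polynomial of the Hilbert series $\sum_i h_i t^i/(1-t)^{\dim R}$. To see this, mod out by a linear regular sequence, which exists after extending $K$ to an infinite field and does not change the Betti numbers. This leaves an Artinian ring whose regularity equals its socle degree, namely the top degree of its Hilbert function, which is $s$.

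It remains to identify the $h$-vector of $R$ and show its top entry is nonzero. By (ASL-1), the standard monomials form a $K$-basis, so $R$ has the same Hilbert function as the Stanley--Reisner ring of the order complex $\Delta(B_n\setminus I)$. That complex is the cone over $\Gamma$ with apex ${\hat 0}$, where $\Gamma$ is as in the proof of Lemma \ref{Boston}. A cone does not change the $h$-vector, so the $h$-vector of $R$ is $(h_0,\ldots,h_d)$, the $h$-vector of $\Gamma$, which has length $d+1=\dim R$.

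In the proof of Lemma \ref{Boston} we computed $h_d=h'_d=\binom{n-1}{d}$. This is nonzero, indeed at least $2$, when $1\le d\le n-2$. Hence $s=d$, and $\reg(R)=d=\rank(B_n\setminus I)$.

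The only delicate point is the bookkeeping of degrees and dimensions. We must check that the order complex's $h$-vector has exactly $d+1$ entries matching $\dim R=d+1$, so that the top index $d$ equals the rank. We must also confirm the convention that $\reg$ here means the regularity of $R$ as an $S$-module, which equals the top degree of the $h$-polynomial in the Cohen--Macaulay case. The bound $h_d\neq 0$ itself comes directly from Lemma \ref{Boston}'s computation.
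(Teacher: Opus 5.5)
Your proof is correct and follows the paper's argument. Cohen--Macaulayness gives $\reg(\Rc_K[B_n\setminus I])=\max\{i : h_i\neq 0\}$: the paper cites \cite[Lemma 2.5]{BV} for this, while you justify it by reducing to an Artinian ring, and you also spell out the identification of the $h$-vector via the cone over $\Gamma$. Then $h_d=\binom{n-1}{d}\neq 0$ from the proof of Lemma \ref{Boston} gives $\reg = d = \rank(B_n\setminus I)$.
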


\begin{proof}
Note that $\rank(B_n \setminus I) = d$. Let $(h_0,h_1,\ldots,h_d)$ be the $h$-vector of $\Rc_K[B_n\setminus I]$.  Since $B_n\setminus I$ is Cohen--Macaulay, it is explicitly stated in, e.g., \cite[Lemma 2.5]{BV} that
$$
\reg(\Rc_K[B_n\setminus I]) = \max\{i : h_i \neq 0\}.
$$
Now, by the proof of Lemma \ref{Boston}, one has $h_d \neq 0$ and
$$\reg(\Rc_K[B_n\setminus I]) = d = \rank(B_n \setminus I),$$
as desired.
\hspace{11.75cm}
\end{proof}

We demonstrate a limited partial solution to Question \ref{rank-selected}. We recall that for a given simplicial complex $\Delta$, its $i$th reduced homology with coefficients in $K$ is denoted by $\widetilde{H}_i(\Delta; K)$.

\begin{Theorem} \label{rank-del}
Let $L$ be a distributive lattice and $\rho_i$ the number of elements of $L$ of rank $i$.
Let $I$ be a rank-fixed dual order ideal of $L$.

\begin{itemize}
\item [(i)] If $\rank(L \setminus I) =0$, then  $\Rc_K[L\setminus I]$ is Gorenstein.

\item [(ii)] If $\rank(L \setminus I) =1$, then  $\Rc_K[L\setminus I]$ is Gorenstein if and only if $\rho_1\leq 2$.

\item [(iii)] If $\rank(L \setminus I) =2$, then $\Rc_K[L\setminus I]$ is Gorenstein if and only if $$(\rho_1, \rho_2)\in \{(1,1), (1,2), (2,1), (3,3)\}.$$
\end{itemize}
\end{Theorem}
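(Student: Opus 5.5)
Throughout, $I$ is rank-fixed, so $P:=L\setminus I$ is exactly the set of elements of $L$ of rank $\leq d$, where $d=\rank(L\setminus I)$. By Corollary \ref{L-I:CM:ASL} (and \cite[Theorem 6.4]{Bac}), $\Rc_K[P]$ is Cohen--Macaulay of Krull dimension $d+1$.

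For the ``only if'' directions I will use Stanley's theorem: a Cohen--Macaulay standard graded algebra that is Gorenstein has a symmetric $h$-vector. The $h$-vector of $\Rc_K[P]$ equals that of $K[\Delta(P)]$, because $J_P$ has the squarefree initial ideal $I_{\Delta(P)}$ (Section $2$). Moreover $\Delta(P)$ is the cone with apex ${\hat 0}$ over $\Delta(P')$, where $P'=P\setminus\{{\hat 0}\}$, so the two complexes have the same $h$-vector.

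For the ``if'' directions I will not go through the degeneration. Instead I will show directly that $\Rc_K[P]$ is a polynomial ring or a hypersurface, or I will invoke Theorem \ref{boolgor}.

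\textbf{(i) and (ii).} If $d=0$, then $P=\{{\hat 0}\}$ and $\Rc_K[P]=K[x_{\hat 0}]$, so (i) holds. If $d=1$, the complex $\Delta(P')$ consists of $\rho_1$ points, so its $h$-vector is $(1,\rho_1-1)$. Symmetry forces $\rho_1\leq 2$. Conversely, if $\rho_1=1$ then $\Rc_K[P]$ is a polynomial ring. If $\rho_1=2$ with atoms $a,b$, there is a single relation $f_{a,b}=x_ax_b$, so $\Rc_K[P]$ is a hypersurface and hence Gorenstein.

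\textbf{(iii).} Let $d=2$. The structure of $L=\Jc(Q)$ controls the counts. Rank-$1$ elements are the minimal elements of $Q$. Rank-$2$ elements are of two kinds:
\begin{itemize}
\item the ${\rho_1\choose 2}$ joins of two atoms, each covering two atoms;
\item $t\geq 0$ elements covering exactly one atom.
\end{itemize}
Hence $\rho_2={\rho_1\choose 2}+t$, and the number of edges of $\Delta(P')$ is $e=2{\rho_1\choose 2}+t$. The $h$-vector of $\Delta(P')$ is then
\[
(1,\ \rho_1+\rho_2-2,\ e-\rho_1-\rho_2+1)=\Bigl(1,\ \rho_1+\rho_2-2,\ {\rho_1\choose 2}-\rho_1+1\Bigr).
\]

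I now split according to the value of $h_2$.
\begin{itemize}
\item If $\rho_1\leq 2$, then $h_2=0$. Symmetry requires $h=(1,1)$ or $(1)$, that is, $\rho_1+\rho_2\leq 3$. This gives $(1,1)$, $(1,2)$ or $(2,1)$.
\item If $\rho_1\geq 4$, then $h_2\geq 3$ is never equal to $h_0=1$, so the $h$-vector is not symmetric.
\item If $\rho_1=3$, then $h_2=1$. I expect this to be the main obstacle, since symmetry alone does not pin down $t$. My first attempt is to show that an element covering a single atom produces a vertex of degree $1$ in the graph $\Delta(P')$, and then to exclude this. For the exclusion I would first try the Gorenstein property of the links of the Cohen--Macaulay order complex. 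If that fails, I would compute the socle of a linear system of parameters quotient in the interval above that atom, using Lemma \ref{restrict}-style retracts, to force $t=0$ and thus $(\rho_1,\rho_2)=(3,3)$.
\end{itemize}

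For sufficiency in (iii), I treat the four pairs separately.
\begin{itemize}
\item $(1,1)$: $P$ is a chain and $\Rc_K[P]$ is a polynomial ring.
\item $(1,2)$: $P$ is ${\hat 0}<a<b,c$ with the single relation $x_bx_c$, a hypersurface.
\item $(2,1)$: $P$ is ${\hat 0}<a,b<c$ with the single relation $x_ax_b-x_{\hat 0}x_c$, a hypersurface.
\item $(3,3)$: $L$ has $Q$ equal to three incomparable points up to the relevant ranks, so $P\cong B_3\setminus\{{\hat 1}\}$, which is Gorenstein by Theorem \ref{boolgor}.
\end{itemize}
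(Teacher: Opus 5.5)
Outside the case $\rho_1=3$ your argument is sound. For $\rho_1\le 2$ and $\rho_1\ge 4$, the formula $h_2=\binom{\rho_1}{2}-\rho_1+1$ combined with Stanley's symmetry criterion is a cleaner route than the paper's case split by the number of cycles of the Hasse diagram. The sufficiency checks, including $P\cong B_3\setminus\{\hat 1\}$ for $(3,3)$, are also correct.

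\textbf{The gap} is the case $\rho_1=3$, $t\ge 1$, equivalently $\rho_2\ge 4$. There $h=(1,\rho_2+1,1)$ is symmetric, so everything rests on an argument you only sketch, and this case is the substance of part (iii). Neither sketch works as stated:
\begin{itemize}
\item The Gorenstein property of $\Rc_K[P]$ does not pass to $K[\Delta(P)]$ or to links of $\Delta(P)$. Degeneration to the initial ideal preserves Cohen--Macaulayness and extremal Betti numbers, but the other graded Betti numbers can only increase. So the pendant vertex making $K[\Delta(P)]$ non-Gorenstein is no contradiction by itself. (By Hochster's formula, deleting $\hat 0$ and the atom below the leaf isolates it, giving $\beta_{p,p+1}(K[\Delta(P)])\ge 1$.)
\item The socle computation is not carried out. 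The retract inequality of Lemma \ref{restrict} compares Betti numbers in the same homological degree, and $\Rc_K[L']$ has smaller projective dimension. So it says nothing about the last Betti number of $\Rc_K[P]$, which is what decides Gorensteinness.
\end{itemize}

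\textbf{What is missing} is a way to see the leaf inside $\Rc_K[P]$ itself; the paper does this with a colon ideal. Let $c$ be a rank-$2$ element covering only the atom $a$, and let $S$ be the polynomial ring on $P$.
\begin{enumerate}
\item Only $\hat 0,a,c$ lie below $c$, so $c$ is never a minimal upper bound of an incomparable pair. Hence every $f_{\alpha,\beta}$ lies in the prime $\mathfrak p=(x_\beta:\beta\ne\hat 0,a,c)$.
\item Since $c$ is maximal in $P$, $x_cx_\beta\in J_P$ for all such $\beta$. Therefore $(J_P:x_c)=\mathfrak p$, generated by $p=|P|-3=\pd\Rc_K[P]$ variables.
\item In the $\Tor$ sequence of $0\to S/(J_P:x_c)(-1)\to S/J_P\to S/(J_P,x_c)\to 0$, one has $\Tor_{p+1}(K,S/(J_P,x_c))_{p+1}=0$. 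This holds because $x_c$ is the only linear generator of $(J_P,x_c)$ and the remaining generators are quadrics not involving $x_c$.
\item Hence $K\cong\Tor_p(K,S/\mathfrak p)_p$ injects into $\Tor_p(K,\Rc_K[P])_{p+1}$, so $\beta_{p,p+1}(\Rc_K[P])\ge 1$.
\item Since $\Rc_K[P]$ is Cohen--Macaulay with $h_2=1\neq 0$, its regularity is $2$ and is attained in the last homological degree. So $\beta_{p,p+2}(\Rc_K[P])\ne 0$ as well.
\end{enumerate}
Thus $\beta_p(\Rc_K[P])\ge 2$, and $\Rc_K[P]$ is not Gorenstein.

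There is an alternative closer to your leaf intuition. Inverting $x_{\hat 0}$ eliminates the three joins of atoms via $x_{a\vee b}=x_ax_b/x_{\hat 0}$. This identifies $\Rc_K[P][x_{\hat 0}^{-1}]$ with a Laurent extension of the Stanley--Reisner ring of a triangle with $t$ pendant edges. That ring is not Gorenstein, since the complex has no cone point and is not a cycle. Because localizations of Gorenstein rings are Gorenstein, this also rules out $t\ge 1$.
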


\begin{proof}
(i) If $\rank(L \setminus I) =0$, then  $\Rc_K[L\setminus I]$ is the polynomial ring on one variable which is Gorenstein.

(ii) If $\rank(L \setminus I) = 1$, then $J_{L\setminus I}$ is the edge ideal of the complete graph $K_{\rho_1}$, which is Gorenstein if and only if $\rho_1\leq 2$ (\cite[Corollary 9.3.3]{HHgtm260}).

(iii)  Let $\rank(L \setminus I) =2$.  If $\rho_1=1$, then $J_{L\setminus I}$ is the edge ideal of the complete graph $K_{\rho_2}$. So, it is Gorenstein if and only if $\rho_2\leq 2$.

  Suppose that $\rho_1\geq 2$. If $(\rho_1, \rho_2)=(2,1)$, then $J_{L\setminus I}$ is a principal ideal and $\Rc_K[L\setminus I]$ is Gorenstein.  If $(\rho_1, \rho_2)=(3,3)$, then $\Rc_K[L\setminus I]=\Rc_K[B_3\setminus \hat{1}_{B_3}]$, which is Gorenstein (Theorem \ref{boolgor}).

Now, suppose that $\Rc_K[L\setminus I]$ is Gorenstein with $\rho_1\geq 2$.  Let $\Delta$ denote the order complex of $(L\setminus I)\setminus\{\hat{0}_L\}$ and $h(\Delta)=(h_0, h_1, h_2)$ its $h$-vector.  Let $H$ denote the Hasse diagram of $(L\setminus I)\setminus\{\hat{0}_L\}$, which is considered as a connected bipartite graph in the obvious way.  We distinguish the following three cases.

\medskip
{\bf Case 1.} Suppose that $H$ has at least two cycles.  Then $|E(H)|>|V(H)|$.  Hence,$$h_2=1-|V(H)|+|E(H)|\geq 2.$$Consequently, $\Rc_K[L\setminus I]$ cannot be Gorenstein.

\medskip
{\bf Case 2.} Suppose that $H$ has exactly one cycle.  If $\rho_1\geq 4$, then an interval of $L$ containing ${\hat 0}_L$ is the boolean lattice $B_4$.  Thus, $H$ has at least four cycles of length $6$, a contradiction.  Let $\rho_1\leq 3$. If $\rho_1=2$, as the two elements of rank $1$ have a unique join of rank $2$, $H$ can possess no cycle, a contradiction. Therefore, $\rho_1=3$.  It follows that $\rho_2\geq 3$ and the length of the unique cycle of $H$ is $6$. If $\rho_2=3$, then we are done.  Let $\rho_2\geq 4$ and $V(H)=\{x_1, \ldots, x_m\}$.  Set $p:=\pd{\Rc_K[L\setminus I]}$ and $r:=\reg{\Rc_K[L\setminus I]}$.  Since $\Rc_K[L\setminus I]$ is Cohen--Macaulay and $\dim\Rc_K[L\setminus I]=\rank(L\setminus I)+1=3$, we deduce from Auslander--Buchsbaum formula that $$p=(m+1)-(\rank(L\setminus I)+1)=m-2.$$ Since $\Delta$ is a connected simplicial complex of dimension one with $f$-vector $f(\Delta)=(|V(H)|, |E(H)|)$, one has $$\dim_k\widetilde{H}_1(\Delta; K)
=1-|V(H)|+|E(H)|=1.$$It follows from Hochster's formula \cite[Theorem 8.1.1]{HHgtm260} that $$\beta_{m-2,m}(K[L\setminus I]/I_{\Delta})=1.$$ Since $\Delta$ has $m$ vertices, $\beta_{m-2,s}(K[L\setminus I]/I_{\Delta})=0$ for each $s\geq m+1$. As $\beta_{p, p+r}((K[L\setminus I]/I_{\Delta})\neq 0$, we deduce that $r=2$. By using \cite[Corollary 2.7]{CV}, one has $$\beta_{m-2,m}(\Rc_K[L\setminus I])=\beta_{m-2,m}(K[L\setminus I]/I_{\Delta})=1.$$

Now, we study $\beta_{m-2,m-1}(\Rc_K[L\setminus I])$.  Let $C$ be the unique cycle of $H$ and choose $x_i\in V(H)\setminus V(C)$.  One has $\rank_{L}(x_i)=2$. Since $x_i\notin V(C)$, it follows that $x_i$ is a leaf of $H$.  Let $x_j$ be the unique neighbor of $x_i$.  one has $\rank_{L}(x_j) = 1$.  Hence, $x_ix_t\in J_{L\setminus I}$, for each $t=1, \ldots, m$ with $t\neq i,j$.  Considering the initial ideal of $J_{L\setminus I}$ with respect to $<_{\rm rev}$ introdeced in Section $2$, it follows that that$$(J_{L\setminus I}: x_i)=(x_t : t=1, \ldots m, t\neq i,j).$$The short exact sequence$$0\longrightarrow \frac{K[L\setminus I]}{(J_{L\setminus I}:x_i)}(-1)\longrightarrow \frac{K[L\setminus I]}{J_{L\setminus I}}\longrightarrow \frac{K[L\setminus I]}{(J_{L\setminus I},x_i)}\longrightarrow 0$$induces the exact sequence
\begin{align*}
\Tor_{m-1}^{K[L\setminus I]}\Big(K, \frac{K[L\setminus I]}{(J_{L\setminus I},x_i)}\Big)_{m-1} & \longrightarrow \Tor_{m-2}^{K[L\setminus I]}\Big(K, \frac{K[L\setminus I]}{(J_{L\setminus I}:x_i)}\Big)_{m-2}\\ & \longrightarrow \Tor_{m-2}^{K[L\setminus I]}\Big(K, \frac{K[L\setminus I]}{J_{L\setminus I}}\Big)_{m-1}.
\end{align*}
Since $J_{L\setminus I}$ is generated in degree two, one has$$\Tor_{m-1}^{K[L\setminus I]}\Big(K, \frac{K[L\setminus I]}{(J_{L\setminus I},x_i)}\Big)_{m-1}=0.$$Since $(J_{L\setminus I}:x_i)$ is generated by $m-2$ variables, one has $$\dim_K \Tor_{m-2}^{K[L\setminus I]}\Big(K, \frac{K[L\setminus I]}{(J_{L\setminus I}:x_i)}\Big)_{m-2}=1.$$The above exact sequence implies that $\beta_{m-2,m-1}(\Rc_K[L\setminus I])\geq 1$, which together with $\beta_{m-2,m}(\Rc_K[L\setminus I])=1$ yields $\beta_{m-2}(\Rc_K[L\setminus I])\geq 2$. Consequently, $\Rc_K[L\setminus I]$ cannot be Gorenstein, a contradiction.

\medskip
{\bf Case 3.} Suppose that $H$ is a tree.  We work with the same notations as in Case 2.  The argument done in Case 2 implies that $\widetilde{H}_1(\Delta; K)=0$. It follows from Hochster's formula that $\beta_{m-2,m}(K[L\setminus I]/I_{\Delta})=0$. Since $\Delta$ has $m$ vertices, $\beta_{m-2,s}(K[L\setminus I]/I_{\Delta})=0$, for each $s\geq m+1$. Since $\beta_{p, p+r}((\Rc_K[L\setminus I])=\beta_{p, p+r}((K[L\setminus I]/I_{\Delta})\neq 0$, we deduce that $r\leq 1$. Since $J_{L\setminus I}$ is generated in degree two, one has $r=1$. In other words, $J_{L\setminus I}$ has a linear resolution. Since $\Rc_K[L\setminus I]$ is Gorenstein, we deduce that $J_{L\setminus I}$ is a principal ideal.  Finally, it follows from $\rho_1\geq 2$ that $(\rho_1,\rho_2)=(2,1)$, as desired.
\end{proof}

A {\em level ring} introduced in Stanley \cite{Sta77} is a Cohen--Macaulay graded ring whose canonical module is generated in one degree.  Every Gorenstein ring is level.

\begin{Theorem}
    \label{level}
    Let $B_n$ be a boolean lattice and $I$ a rank-fixed dual order ideal of $B_n$.  Then $\Rc_K[B_n \setminus I]$ is level.
\end{Theorem}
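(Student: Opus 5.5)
Since $\Rc_K[B_n\setminus I]$ is an ASL on $B_n\setminus I$ (Lemma \ref{L-I:ASL}), it degenerates to the Stanley--Reisner ring $K[\Delta(B_n\setminus I)]$, and both rings are Cohen--Macaulay. The plan is therefore to transfer levelness from the Stanley--Reisner ring to $\Rc_K[B_n\setminus I]$. Levelness of a Cohen--Macaulay graded ring means that the last module in its minimal free resolution is generated in a single degree, that is, $\beta_{p,j}\neq 0$ for only one $j$, where $p$ is the projective dimension. Both rings have the same projective dimension, by Auslander--Buchsbaum and equal dimension. By \cite[Corollary 2.7]{CV} together with upper semicontinuity of Betti numbers under Gröbner degeneration (the initial ideal of $J_{B_n\setminus I}$ is squarefree), we have $\beta_{p,j}(\Rc_K[B_n\setminus I])\le \beta_{p,j}(K[\Delta])$. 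The degrees in which the last module is concentrated for $\Rc_K$ thus lie among those for $K[\Delta]$. It therefore suffices to prove that $\Delta=\Delta(B_n\setminus I)$ is level, i.e.\ that $\beta_{p,j}(K[\Delta])$ is nonzero for only one $j$.

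We may assume $I$ is generated in rank $d+1$ with $1\le d\le n-2$; the remaining cases are Gorenstein by Theorem \ref{boolgor}, hence level. Write $\Gamma$ for the order complex of $(B_n\setminus I)\setminus\{\hat 0\}$. Then $\Delta$ is a cone over $\Gamma$ with apex $\hat 0$, so $K[\Delta]=K[\Gamma][x_{\hat 0}]$ and levelness of $\Delta$ reduces to levelness of $\Gamma$. As in the proof of Lemma \ref{Boston}, $\Gamma$ is the barycentric subdivision of the $(d-1)$-skeleton $\Delta_{n,d}$ of the $(n-1)$-simplex.

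For a Cohen--Macaulay complex $\Gamma$ with $m$ vertices, Hochster's formula gives
\[
\beta_{p,j}(K[\Gamma])=\sum_{|W|=j}\dim_K \widetilde H_{j-p-1}(\Gamma_W;K),
\]
with $p=m-d$. The goal is to show these vanish unless $j=m$, equivalently that the canonical module is generated in the top degree $d$. The standard criterion we will use is the following: a Cohen--Macaulay complex $\Gamma$ of dimension $d-1$ is level if and only if, for every proper subset $W\subsetneq V(\Gamma)$, one has $\widetilde H_{d-1}(\Gamma_W)=0$. This is equivalent to the statement that no top-dimensional homology class is supported on a proper induced subcomplex, and it holds whenever $\widetilde H_{d-1}(\Gamma)$ is spanned by cycles using all vertices in a suitably strong sense. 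Concretely, we will verify that any nonzero $(d-1)$-cycle of $\Gamma$ involves every vertex.

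This last step is the main obstacle. We will argue as follows. A top cycle $z$ of the barycentric subdivision corresponds, by subdivision, to a $(d-1)$-cycle $z'$ of $\Delta_{n,d}$. Each simplex $F$ of $\Delta_{n,d}$ is subdivided into the chains inside $F$, and the coefficient of a maximal chain of $\Gamma$ in $z$ equals $\pm$ the coefficient of its top face $F$ in $z'$. Hence a vertex $\sigma\in\Gamma$ (a nonempty subset of size $\le d$) appears in $z$ if and only if some $d$-face $F\supseteq\sigma$ has nonzero coefficient in $z'$. So we need to show that every nonzero $(d-1)$-cycle $z'$ of the full $(d-1)$-skeleton of the simplex has support covering all of $[n]$. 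Indeed, if $z'$ avoided vertex $i$, it would be a top cycle in the $(d-1)$-skeleton of a simplex on $n-1$ vertices, and we then need a cycle of the full skeleton to use every vertex. This fails in general: a boundary of a $d$-simplex is a cycle on only $d+1$ vertices. So the argument must instead be done degree-wise. If homology is supported on a proper $W$ in $\Gamma$, compute $\widetilde H_{d-1}(\Gamma_W)$ directly. Since $\Gamma_W$ is the order complex of the subposet $W$, we will show that its top homology vanishes unless $W$ is all of $\Gamma$, by removing a non-$W$ element and using that Boolean skeleta are shellable, so that deleting an element of the poset $\{\sigma:|\sigma|\le d\}$ yields a complex whose top homology (via the Möbius function/rank-selection) drops. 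I expect this homology vanishing for proper induced subposets to require the most care, and I would first try a lexicographic shelling of $B_n\setminus I$ in which every facet has a restriction face containing the $\hat 1$-side element, showing all homology facets have maximal restriction.
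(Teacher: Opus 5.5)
Your reduction matches the paper's: upper semicontinuity under the squarefree Gr\"obner degeneration, plus equal projective dimension of the two Cohen--Macaulay rings, reduces levelness of $\Rc_K[B_n\setminus I]$ to that of $K[\Delta]$, and coning off $\hat 0$ reduces further to $\Gamma$. The gap is in the only substantive step, levelness of $\Gamma$.

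\textbf{The levelness criterion you state is wrong.} Hochster's formula with $p=m-d$ says that a subset $W$ with $|W|=j<m$ contributes $\widetilde H_{j-p-1}(\Gamma_W;K)=\widetilde H_{d-1-(m-j)}(\Gamma_W;K)$ to $\beta_{p,j}$, not $\widetilde H_{d-1}(\Gamma_W;K)$. Since $h_d=\binom{n-1}{d}\neq 0$, levelness of $\Gamma$ is equivalent to $\widetilde H_{d-1-(m-|W|)}(\Gamma_W;K)=0$ for every proper $W$. The homological degree drops by one for each vertex removed.
\begin{itemize}
\item Your condition $\widetilde H_{d-1}(\Gamma_W)=0$ is not necessary for levelness. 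The $1$-skeleton of a tetrahedron is Cohen--Macaulay with a linear resolution, hence level, yet it contains an induced triangle.
\item In the case at hand your condition is false. Take $S\subset[n]$ with $|S|=d+1$ and let $W$ be the set of nonempty proper subsets of $S$. Then $\Gamma_W$ is the subdivided boundary of a $d$-simplex, so $\widetilde H_{d-1}(\Gamma_W;K)\neq 0$.
\end{itemize}
You noticed this obstruction, but your fallback still aims to prove that the top homology of every proper induced subposet vanishes, which cannot succeed. The closing remark about a lexicographic shelling does not help either. Restriction-face sizes only record the $h$-vector, and the $h$-vector does not determine socle degrees: a bowtie graph and a $5$-cycle with a chord both have $h=(1,3,2)$, but only the second is level.

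\textbf{How the paper proves it.} The paper avoids topology.
\begin{itemize}
\item Let $I_0$ be the rank-fixed dual order ideal one rank higher, so that $B_n\setminus I_0=(B_n\setminus I)\cup\{A_1,\ldots,A_k\}$, where the $A_i$ are the $(d+1)$-subsets. Let $\Delta_0$ be its order complex, $S_0=K[B_n\setminus I_0]$, and $J_t=I_{\Delta_0}+(x_{A_1},\ldots,x_{A_t})$.
\item $S_0/I_{\Delta_0}$ is Cohen--Macaulay of projective dimension $p+k-1$, so its Betti numbers in homological degree $p+k$ vanish.
\item Each colon $(J_t:x_{A_{t+1}})$ is the Stanley--Reisner ideal of the order complex of $[\hat 0,A_{t+1}]\cong B_{d+1}$, plus variables. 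It therefore defines a Gorenstein ring of projective dimension $p+k-1$ and regularity $d$, whose last Betti numbers vanish in internal degrees $p+k-1+\ell$ for $\ell<d$.
\item The long exact $\operatorname{Tor}$ sequence then gives $\beta_{p+k,p+k+\ell}(S_0/J_{t+1})\le\beta_{p+k,p+k+\ell}(S_0/J_t)$ for $\ell<d$. Iterating from $t=0$ yields $\beta_{p+k,p+k+\ell}(S_0/J_k)=0$.
\item Since $S_0/J_k$ is $K[\Delta]$ with $k$ extra variables killed, $\beta_{p,p+\ell}(K[\Delta])=0$ for $\ell<d$, which is levelness.
\end{itemize}
To salvage your Hochster-formula route, you would have to prove the correct vanishing $\widetilde H_{d-1-(m-|W|)}(\Gamma_W;K)=0$ for all proper $W$, and your proposal contains no argument for that.
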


\begin{proof}
Let $K[B_n \setminus I] = K[\{x_A : A \subset [n], A \in B_n \setminus I\}]$ and $\Delta$ the order complex of $B_n\setminus I$. As $I_{\Delta}$ is an initial ideal of $J_{B_n \setminus I}$, it is enough to prove that $K[B_n \setminus I]/I_{\Delta}$ is a level ring. Set $r:=\rank({B_n\setminus I})=\dim\Delta$. If $r=n$, then $I=\emptyset$ and $B_n\setminus I=B_n$ is Gorenstein. If $r=n-1$, then the assertion follows from Theorem \ref{boolgor}. So suppose that $r\leq n-2$. We know from Corollary \ref{regularity} that $r=\reg({K[B_n \setminus I]/I_{\Delta}})$. Set $p:=\pd({K[B_n \setminus I]/I_{\Delta}})$.

We show that $\beta_{p,p+\ell}(K[B_n \setminus I]/I_{\Delta})=0$ for each $\ell< r$. Let $I_0$ denote the rank-fixed dual order ideal of $B_n$ for which $\rank({B_n\setminus I_0})=r+1$ and $\Delta_0$ the order complex of $B_n\setminus I_0$. Let $A_1, \ldots, A_k$ denote the maximal elements of $B_n\setminus I_0$. In other words, $I\setminus I_0=\{A_1, \ldots, A_k\}$. One has $\pd({K[B_n \setminus I_0]/I_{\Delta_0}})=p+k-1$. In particular, $\beta_{p+k,p+k+\ell}(K[B_n \setminus I_0]/I_{\Delta_0})=0$ for each $\ell$. Set $J_0:=I_{\Delta_0}$ and $J_t:=I_{\Delta_0}+(x_{A_1}, \ldots, x_{A_t})$ for $1 \leq t \leq k$. For $t\leq k-1$ and $\ell< r$, the short exact sequence
$$0\longrightarrow \frac{K[B_n \setminus I_0]}{(J_t:x_{A_{t+1}})}(-1)\longrightarrow \frac{K[B_n \setminus I_0]}{J_t}\longrightarrow \frac{K[B_n \setminus I_0]}{J_{t+1}}\longrightarrow 0$$induces the exact sequence
\begin{align*}
\Tor_{p+k}^{K[B_n\setminus I_0]}\Big(K, \frac{K[B_n\setminus I_0]}{J_t}\Big)_{p+k+\ell} & \longrightarrow \Tor_{p+k}^{K[B_n\setminus I_0]}\Big(K, \frac{K[B_n\setminus I_0]}{J_{t+1}}\Big)_{p+k+\ell}\\ & \longrightarrow \Tor_{p+k-1}^{K[B_n\setminus I_0]}\Big(K, \frac{K[B_n\setminus I_0]}{(J_t:x_{A_{t+1}})}\Big)_{p+k+\ell-1}.
\end{align*}
The above exact sequence implies that
\[
\begin{array}{rl}
\beta_{p+k,p+k+\ell}\Big(\frac{K[B_n\setminus I_0]}{J_{t+1}}\Big)\leq \beta_{p+k,p+k+\ell}\Big(\frac{K[B_n\setminus I_0]}{J_t}\Big)+\beta_{p+k-1,p+k+\ell-1}\Big(\frac{K[B_n\setminus I_0]}{(J_t:x_{A_{t+1}})}\Big).
\end{array} \tag{1} \label{1}
\]
Note that$$(J_t: x_{A_{t+1}})=(I_{\Delta_0}:x_{A_{t+1}})+ ({\rm some \ variables})=I_{\link_{\Delta_0}({A_{t+1}})}+ ({\rm some \ variables}).$$Since $A_{t+1}$ is a maximal element of $B_n\setminus I_0$, we deduce that $I_{\link_{\Delta_0}{A_{t+1}}}$ is the Stanley--Reisner ideal of the order complex of $B_{|A_{t+1}|}=B_{r+1}$. Therefore, the regularity of $(J_t: x_{A_{t+1}})$ is $r$. On the other hand, by the same argument,$$\pd{\big(K[B_n\setminus I_0]/I_{\link_{\Delta_0}{(A_{t+1}})}\big)}=2^{|A_{t+1}|}-(|A_{t+1}|+1)=2^{r+1}-r-2.$$Since the variables belonging to $(J_t: x_{A_{t+1}})$ are all the variables $x_B\in K[B_n\setminus I_0]$ for which $B$ is not contained in $A_{t+1}$, we deduce that the number of these variables is $|B_n\setminus I_0|-2^{|A_{t+1}|}=|B_n\setminus I_0|-2^{r+1}$. Consequently,
\begin{align*}
\pd{\big(K[B_n\setminus I_0]/(J_t: x_{A_{t+1}})\big)} &=2^{r+1}-r-2+|B_n\setminus I_0|-2^{r+1}\\ &=|B_n\setminus I_0|-r-2=p+k-1.
\end{align*}
Since $K[B_n\setminus I_0]/I_{\link_{\Delta_0}{(A_{t+1}})}$ is Gorenstein, it follows that $K[B_n\setminus I_0]/(J_t: x_{A_{t+1}})$ is Gorenstein. In particular,$$\beta_{p+k-1,p+k+\ell-1}\Big(\frac{K[B_n\setminus I_0]}{(J_t:x_{A_{t+1}})}\Big)=0, \quad \ell< r.$$ We conclude from inequality (\ref{1}) that$$\beta_{p+k,p+k+\ell}\Big(\frac{K[B_n\setminus I_0]}{J_{t+1}}\Big)\leq\beta_{p+k,p+k+\ell}\Big(\frac{K[B_n\setminus I_0]}{J_t}\Big).$$A repeated application of the above inequality implies that$$\beta_{p+k,p+k+\ell}\Big(\frac{K[B_n\setminus I_0]}{J_k}\Big)\leq\beta_{p+k,p+k+\ell}\Big(\frac{K[B_n\setminus I_0]}{J_0}\Big).$$Recall that $J_0=I_{\Delta_0}$ and $\pd{(K[B_n \setminus I_0]/I_{\Delta_0})}=p+k-1$. Hence, the right hand side of the above inequality is zero. Thus $$\beta_{p+k,p+k+\ell}\Big(\frac{K[B_n\setminus I_0]}{J_k}\Big)=0,\quad \ell< r.$$
As $J_k=I_{\Delta_0}+(x_{A_1}, \ldots, x_{A_k})=I_{\Delta}+(x_{A_1}
, \ldots,  x_{A_k})$ and since the generators of $I_{\Delta}$ are note divisible by $x_{A_1}, \ldots, x_{A_k}$, we deduce from the above equality that $$\beta_{p,p+\ell}(K[B_n\setminus I]/I_{\Delta})=0.$$This shows that $K[B_n\setminus I]/I_{\Delta}$ is a level ring, as desired.
\, \, \, \, \, \, \, \, \, \, \, \, \, \, \,
\end{proof}

Theorem \ref{level} naturally yields the following conjecture.

\begin{Conjecture}
{\em
Let $\Delta$ be a triangulation of a sphere and $P_\Delta$ its {\em face poset}.  In other words, $P_\Delta$ consists of all faces of $\Delta$ including $\emptyset$, ordered by inclusion.  Recall that $P_\Delta$ is a simplicial poset  \cite{Sta91}.  Let $I$ be a rank-fixed dual order ideal of $P_\Delta$.  Then $\Rc_K[P_\Delta \setminus I]$ is level.
    }
\end{Conjecture}

It would, of course, be of interest to find a reasonable class of Cohen--Macaulay posets $P$ of distributive type for which $\Rc_K[P]$ is Gorenstein.

\section*{Acknowledgments}
The second author is supported by a FAPA grant from Universidad de los Andes.

\section*{Statements and Declarations}
The authors have no Conflict of interest to declare that are relevant to the content of this article.

\section*{Data availability}
Data sharing does not apply to this article as no new data were created or analyzed in this study.


\begin{thebibliography}{99}
\bibitem{Bac}
K.~Baclawski, Cohen--Macaulay ordered sets, {\em J. Algebra} {\bf 63} (1980), 226--258.
\bibitem{Bjo}
A.~Bj\"orner, Shellable and Cohen--Macaulay partially ordered sets, {\em Trans. Amer. Math. Soc.} {\bf 260} (1980), 159--183.
\bibitem{BjWa}
A.~Bj\"orner and M.~Wachs, Shellable nonpure complexes and posets II, {\em Trans. Amer. Math. Soc.} {\bf 349} (1997), 3945--3975.
\bibitem{BV}
B.~Benedetti and M.~Varbaro, On the dual graph of Cohen--Macaulay algebras, {\em Int. Math. Res. Not. IMRN} {\bf 17} (2015), 8085--8115.
\bibitem{CV}
A.~Conca and M.~Varbaro, Square-free Gröbner degenerations, {\em Inventiones Mathematicae} {\bf 221} (2020), 713--730.
\bibitem{Eis}
D.~Eisenbud, Introduction to algebras with straightening laws, {\em in} ``Ring Theory
and Algebra III'' (B.~R.~McDonald, Ed.), Proc. of the third Oklahoma Conf., Lect. Notes in Pure and Appl. Math. No. 55, Dekker, 1980, 243--268.
\bibitem{EHH}
V.~Ene, J.~Herzog and T.~Hibi, Linearly related polyominoes, {\em J. Algebraic Combinatorics} {\bf 41} (2015), 949--968.
\bibitem{HHgtm260}
J.~Herzog and T.~Hibi, ``Monomial Ideals'', GTM 260, Springer, 2011.
\bibitem{Hibi}
T.~Hibi, Distributive lattices, affine semigroup rings and algebras with straightening laws, {\it in} ``Commutative Algebra and Combinatorics''
(M. Nagata and H. Matsumura, Eds.), Advanced Studies in Pure Math.,
Volume 11, North--Holland, Amsterdam, 1987, pp. 93--109.
\bibitem{HIBIred}
T.~Hibi, ``Algebraic Combinatorics on Convex Polytopes,'' Carslaw Publications, Glebe, N.S.W., Australia, 1992.
\bibitem{OHH}
H.~Ohsugi, J.~Herzog and T.~Hibi, Combinatorial pure subrings, {\em Osaka J. Math.} {\bf 37} (2000), 745--757.
\bibitem{Rei}
G.~Reisner, Cohen--Macaulay quotients of polynomial rings, {\em Advances in Math.} {\bf 21} (1976), 30--49.
\bibitem{Sta75}
R.~P.~Stanley, The upper bound conjecture and Cohen--Macaulay rings, {\em Stud. Appl. Math.} {\bf 54} (1975), 135--142.
\bibitem{Sta77}
R.~P.~Stanley, Cohen--Macaulay complexes, ``Higher Combinatorics, Second Ed.'' (M.~Aigner, Ed.), NATO Advanced Study Institute Series, Reidel, Dordrecht/Boston, 1977, pp. 51--62.
\bibitem{StaGREEN}
R.~P.~Stanley, ``Combinatorics and Commutative Algebra, Second Ed.,'' Birh\"auser, Boston, 1996.
\bibitem{Sta91}
R.~P.~Stanley, $f$-vectors and $h$-vectors of
simplicial posets, {\em J. Pure and Appl. Algebra} {\bf 71} (1991), 319--331.
\bibitem{EC1}
R.~P.~Stanley, ``Enumerative Combinatorics, Volume I, Second Ed.,'' Cambridge Studies in Advanced Mathematics 49, Cambridge University Press, 2012.
\end{thebibliography}
\end{document}